\newcommand{\makeblue}[1]{#1} 
\newcommand{\makered}[1]{#1} 
\newcommand{\makeviolet}[1]{#1} 
\newcommand{\modif}[1]{#1} 
\theoremstyle{plain}
\newtheorem{thm}{Theorem}[section]
\newtheorem{lm}[thm]{Lemma}
\newtheorem{cl}[thm]{Corollary}
\newtheorem{prop}[thm]{Proposition}
\newtheorem{ex}[thm]{Example}
\theoremstyle{definition}
\newtheorem{defi}[thm]{Definition}
\newtheorem{remark}[thm]{Remark}
\theoremstyle{remark}
\theoremstyle{plain}
\numberwithin{equation}{section}
\newcommand{\R}{{\mathord{\mathbb R}}}
\newcommand{\C}{{\mathord{\mathbb C}}}
\newcommand{\N}{{\mathord{\mathbb N}}}
\newcommand{\Cn}{\C^n}
\newcommand{\Cnn}{\C^{n\times n}}
\DeclareMathOperator{\diag}{diag}
\DeclareMathOperator{\range}{range}
\DeclareMathOperator{\rank}{rank}
\DeclareMathOperator{\toeplitz}{toeplitz}
\newcommand{\BH}{B_H} 
\newcommand{\BA}{B_A} 
\newcommand{\mB}{B} 
\newcommand{\cO}{{\bigO}}
\newcommand{\bigO}{{\mathcal{O}}}
\newcommand{\be}{{\mathbf e}}
\newcommand{\bv}{{\mathbf v}}
\newcommand{\bx}{{\mathbf x}}
\newcommand{\dd}[1]{\,\operatorname{d}\!#1}
\newcommand{\ddt}{\frac{\dd}{\dd{t}}}
\newcommand{\ddtau}{\frac{\dd}{\dd{\tau}}}
\newcommand{\ip}[2]{\langle {#1}, {#2} \rangle}
\newcommand{\norm}[1]{\| {#1} \|}
\newcommand{\eps}{\epsilon}
\newcommand{\mHC}{{m_{HC}}}
\newcommand{\setC}{\mathcal{C}}
\newcommand{\sphere}{\mathcal{S}}
\newcommand{\tE}{\widetilde{E}}
\newcommand{\tm}{\widetilde{m}}
\newcommand{\ttilde}[1]{\widetilde{\raisebox{0pt}[0.85\height]{$\widetilde{#1}$}}}
\newcommand{\ttT}{\ttilde{T}}
\newcommand{\tttT}{\widetilde{\widetilde{T}}}
\begin{document}
\title{{\sc  The Hypocoercivity Index for the short time behavior of linear time-invariant ODE systems}}

\author{Franz Achleitner\thanks{TU Wien, Institute of Analysis and Scientific Computing, Wiedner Hauptstr. 8-10, A-1040 Wien, Austria, franz.achleitner@tuwien.ac.at},
Anton Arnold\thanks{TU Wien, Institute of Analysis and Scientific Computing, Wiedner Hauptstr. 8-10, A-1040 Wien, Austria, {anton.arnold@tuwien.ac.at}}, and 
Eric A.\ Carlen\thanks{Department of Mathematics, Rutgers University, 110 Frelinghuysen Rd., Piscataway NJ 08854, USA, carlen@math.rutgers.edu}
} 

\maketitle

\begin{abstract}
We consider the class of conservative-dissipative ODE systems, which is a subclass of Lyapunov stable, linear time-invariant ODE systems. 
We characterize asymptotically stable, conservative-dissipative ODE systems via the hypocoercivity (theory) of their system matrices. 
Our main result is a concise characterization of the hypocoercivity index (an algebraic structural property of matrices with positive semi-definite Hermitian part introduced in~\textit{Achleitner, Arnold, and Carlen (2018)}) in terms of the short time behavior of the \makeblue{norm of the matrix exponential} for the associated conservative-dissipative ODE system.
\end{abstract}

\medskip
\centerline{Keywords: semi-dissipative ODE systems, hypocoercivity (index)}

\centerline{MSC: 34D05, 34A30, 34Exx}

\maketitle

\section{Introduction}\label{sec:intro}
In this paper we shall use hypocoercivity techniques to characterize the short time behavior of linear time-invariant ODE systems of the form 
\begin{equation}\label{ODE:B} 
  \bx'(t) = -B\bx(t)\,, 
\end{equation}
with matrices $B\in\Cnn$ whose Hermitian part $\BH :=(B+B^*)/2$ is positive semi-definite\footnote{We use the following notation:
The conjugate transpose of a matrix $B\in\Cnn$ is denoted by $B^*$.
Positive definiteness (resp.~semi-definiteness) of Hermitian matrices is denoted by $B>0$ (resp.~$B\geq 0$).}, such that~$-B$ is a conservative-dissipative (or semi-dissipative) matrix, see Definition~\ref{def:con-dis} below.
An extension to~\emph{stable systems} of the form~\eqref{ODE:B}, i.e.\ matrices~$B$ having their spectrum in the closed right half plane where, additionally, purely imaginary eigenvalues are non-defective\footnote{An eigenvalue is non-defective if its algebraic and geometric multiplicities coincide.}, are discussed in the parallel article~\cite{AAM22}. 

Concerning the short time behavior \makeblue{of the propagator~$P(t)$ of~\eqref{ODE:B}, given by the fundamental matrix~$P(t):=e^{-Bt}$, we shall be interested in estimates on the spectral norm of the matrix exponential $e^{-Bt}$} of the form 
\begin{equation}\label{short-t-decay1}
  \|P(t)\|_2 = 1-c\,t^a+\bigO(t^{a+1})\quad\mbox{ for } t\to 0^+\,,
\end{equation}
where $c>0$ and $a\in\N$. 
\makeblue{For practical reason, we shall refer to the spectral norm of the matrix exponential $e^{-Bt}$ as the \emph{propagator norm}.}
The large time behavior, including conditions guaranteeing exponential decay estimates of the form
\begin{equation}\label{HC-decay}
  \|P(t)\|_2 \le c\,e^{-\mu t},\qquad t\ge0\,,
\end{equation}
with some $c\ge1$, and $\mu>0$ will be discussed in the  forthcoming article \cite{AAC22}. 

Following \cite{Vi09}, the matrix $B$ is called \emph{hypocoercive} if~\eqref{HC-decay} holds, and $B$ is \emph{coercive} if and only if $\|P(t)\|_2 \le e^{-\mu t}$ for some $\mu>0$ and all $t\ge0$. 
For the intermediate time regime we shall be interested in a \emph{waiting time} $t_0$ such that all solutions will have decayed (at least) by a factor $1/e$, i.e.
\begin{equation}\label{propagator_norm:waiting_time}
  \|P(t)\|_2 \le \frac1e\,, \qquad t\ge t_0\,.
\end{equation}
Here we shall include numerical evidence on this intermediate time regime; a detailed analysis will be provided in \cite{AAC22}.

Our interest in systems~\eqref{ODE:B} is motivated by non-equilibrium statistical physics and, in particular, kinetic theory. 
There, a frequently encountered class of linear equations 
has the form
\[
 \frac{\partial}{\partial t}f(t,x,v)
 = -v\cdot \nabla_x f(t,x,v) + Q f(t,x,v)\ ,
\]
where $Q$ is a linear operator, the so-called {\em linearized collision operator}, describing changes in velocity resulting from binary collisions.
Take the domain of the $x$ variable to be a $d$--dimensional torus $\mathcal{T}$ of side-length $L$.   
The operator $v\cdot \nabla_x$, called the streaming operator is anti-Hermitian on a weighted $L^2$-space on $\mathcal{T}\times \R^d$, whereas $Q$ is negative semi-definite on the same weighted $L^2$-space.
Concerning examples we refer to \cite[\S1.4]{DoMoSch15}, \cite{AAC16, AAC18} where a modal decomposition (in $x$) of kinetic BGK-type equations\footnote{named after the physicists Bhatnagar--Gross--Krook.} led to ODE systems like~\eqref{ODE:B}; in the case of continuous velocities it actually led to ``infinite matrices'' $B$. 
In the follow-up paper \cite{ASS20} a spectral decomposition of Fokker--Planck equations allowed for a precise analysis of their short time behavior in the spirit of \eqref{short-t-decay1}.

Very often, one is interested in initial data $f_0(x,v)$ for the equation that are nearly constant on spatial scales much smaller than $L$, where $L$ is large, so that $v\cdot \nabla_x f(t,x,v)$ will be of order $1/L$. 
Finite dimensional approximations of such systems result in systems of ODEs of the form \eqref{ODE:B}, where its Hermitian part $\BH$ is positive semi-definite.
Hence it is often natural to write~\eqref{ODE:B} in the form
\begin{equation}\label{ODE:epsA+C}
 \bx'(t) = -\epsilon A\bx(t) -C\bx(t)\,,
\end{equation}
where $\epsilon\in\R$, $A\in\Cnn$ is anti-Hermitian, and $C\in\Cnn$ is positive semi-definite Hermitian. 
The systems~\eqref{ODE:B} and~\eqref{ODE:epsA+C} are related via $\BA=\epsilon A$ and $\BH=C$, where $\BA :=(B-B^*)/2$ denotes the anti-Hermitian part of~$B$.
One is then led to study the asymptotics of the solution as $\epsilon\to 0$.
This is often referred to as \emph{asymptotic limit}.

\medskip
The main goal of this paper is to give a concise interpretation of the \emph{hypocoercivity index} (an algebraic structural property of the matrix $B$, introduced in \cite{AAC18}) in terms of the exponent $a$ in short time estimate~\eqref{short-t-decay1}. 

The rest of this paper is organized as follows: 
In the remainder of \S\ref{sec:intro} we define and characterize \emph{conservative-dissipative ODE systems} and \emph{hypocoercive matrices}. 
In \S\ref{subsec:HC-index} we give three equivalent definitions of the \emph{hypocoercivity index} of a matrix and three corresponding \emph{Kalman rank conditions}. 
Our main result, Theorem \ref{th:HC-decay}, gives a sharp characterization of the hypocoercivity index of a matrix $B$ in terms of the short time decay of 
\makeblue{the spectral norm of the associated matrix exponential~$e^{-Bt}$}.
It is presented in \S\ref{sec:HC-interpr}, while technical parts of the proof are deferred to \ref{app:short-time-decay}. 
Finally, \S\ref{subsect:num} gives a numerical illustration of Theorem \ref{th:HC-decay}.



%
\subsection{Conservative-dissipative systems of ODEs}
\label{sec11}
It is well known that the null solution~$\bx(t)\equiv 0$ of a linear system~\eqref{ODE:B} is \emph{(Lyapunov) stable} if all eigenvalues of~$-B$ have non-positive real part and the eigenvalues on the imaginary axis are non-defective, and the null solution~$\bx(t)\equiv 0$ of~\eqref{ODE:B} is \emph{asymptotically stable} if all eigenvalues of~$-B$ have negative real part.
For practical reasons, if the null solution~$\bx(t)\equiv 0$ of a linear system~\eqref{ODE:B} is (asymptotically) stable then we will call~\eqref{ODE:B} an (asymptotically) stable system.

Consider a linear system of ODEs~\eqref{ODE:B} with matrix $B\in\Cnn$.
Then the derivative of the squared Euclidean norm of a solution $\bx(t)$ satisfies
\begin{equation} \label{eq:energy}
 \ddt \|\bx(t)\|_2^2 
 = \langle -B\bx(t),\bx(t)\rangle +\langle \bx(t),-B\bx(t)\rangle
 = -2 \langle \bx(t),\BH\bx(t)\rangle .
\end{equation}
Therefore a sufficient condition for $B$ to generate a stable system~\eqref{ODE:B} is that its Hermitian part is positive semi-definite\footnote{However, the Hermitian part of a matrix~$B$ pertaining to a stable system does not have to be positive semi-definite; a different equivalent norm will generally yield a different sufficient condition.}.
This fact and the importance of this subclass of stable systems in kinetic theory inspires the following definition:
\begin{defi} \label{def:con-dis}
A matrix $-B\in\Cnn$ is called \emph{dissipative} (resp. \emph{conservative-dissipative} or \emph{semi-dissipative}) if the Hermitian part of~$-B$ is negative definite (resp. negative semi-definite). 

For a (conservative-)dissipative matrix $-B\in\Cnn$, the associated 
system of ODEs~\eqref{ODE:B} is called a \emph{(conservative-)dissipative ODE system}.

For practical reasons, a matrix $B\in\Cnn$ is called \emph{positive conservative-dissipative} 
if the Hermitian part of~$B$ is positive semi-definite. 
\end{defi} 
This conservative-dissipative property of a matrix~$B$ is invariant under unitary transformations, but it is not invariant under similarity transformations (see~\cite{AAM22} for details).
Let $\lambda^{\BH}_{min}$ and $\lambda^{\BH}_{max}$ denote the least and the greatest eigenvalues of $\BH$, respectively. 
By the Rayleigh-Ritz variational principle
\[
\lambda^{\BH}_{min}
 =\min_{\|\bx\|_2=1} \ip{ \bx}{ \BH\bx} \qquad{\rm and}\qquad 
\lambda^{\BH}_{max}
 =\max_{\|\bx\|_2=1} \ip{ \bx}{ \BH\bx}\ .
\]
It follows immediately that
\begin{equation}\label{spectral-incl} 
   \lambda^{\BH}_{min}
    \leq \min\{\Re \lambda\,:\, \lambda\in\sigma(B)\}
    \leq \max\{\Re \lambda\,:\, \lambda\in\sigma(B)\}
    \leq \lambda^{\BH}_{max} \,.
\end{equation}
%
%

%
\subsection{Hypocoercive matrices.}
In the introduction, matrices $B\in\Cnn$ are called \emph{hypocoercive} if the associated propagator $P(t):=e^{-tB}$ satisfies~\eqref{HC-decay} for some $c\geq 1$ and $\mu>0$.
In other words, $B$ is hypocoercive if (the null-solution $\bx(t)\equiv 0$ of)
the associated linear system~\eqref{ODE:B} is exponentially stable.
For linear time-invariant systems~\eqref{ODE:B}, exponential stability is equivalent to asymptotic stability (i.e. all solutions approach the origin in the large time limit), which is equivalent to the condition that all eigenvalues of $-B$ have negative real part.
Starting with the classical notion of a coercive operator/matrix, we characterize hypocoercive matrices as follows (see e.g. \cite{AAC18}):
\begin{defi} \label{def:hypocoercive}
A matrix $B\in\Cnn$ is called \emph{coercive} if its Hermitian part~$\BH$ is positive definite, and it is called \emph{hypocoercive} if the spectrum of~$B$ lies in the \emph{open} right half plane.
\end{defi}
Hypocoercive matrices are often called \emph{positive stable}. 
We use the notion of hypocoercivity to emphasize the link to the analogous situation in partial differential equations, see~\cite{AAC18,ArEr14,Vi09}.

\medskip
It is well-known, see \eqref{spectral-incl}, that for positive conservative-dissipative matrices~$B\in\Cnn$, the spectrum of~$B$ lies in the closed right half plane, but there may be purely imaginary eigenvalues.
In this case, the existence of purely imaginary eigenvalues can be characterized as follows:
\begin{prop}[{\cite[Lemma 3.1]{MMS16}, \makeblue{\cite[Lemma 2.4 with Proposition~1(B2), (B4)]{AAC18}}}] \label{prop:border}
Let $\mB\in\Cnn$ be (positive) conservative-dissipative.
Then, $\mB$ has an eigenvalue on the imaginary axis if and only if $\BH v =0$ for some eigenvector~$v$ of~$\BA$. 
\end{prop}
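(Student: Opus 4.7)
The plan is to prove both implications directly using the Cartesian decomposition $B = B_H + B_A$, where $B_H \geq 0$ is Hermitian and $B_A$ is anti-Hermitian (so its eigenvalues are purely imaginary).

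For the easy direction ($\Leftarrow$), assume $B_A v = i\mu v$ for some eigenvector $v \neq 0$ with $B_H v = 0$. Then
\[
Bv = B_H v + B_A v = i\mu v\,,
\]
so $i\mu \in \sigma(B)$ is a purely imaginary eigenvalue of $B$.

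For the non-trivial direction ($\Rightarrow$), suppose $Bv = i\mu v$ for some nonzero $v$ and some $\mu \in \R$. The idea is to test this eigenvalue equation against $v$ in the Hermitian inner product and separate real and imaginary parts. Writing
\[
 i\mu \|v\|_2^2 = \ip{v}{Bv} = \ip{v}{B_H v} + \ip{v}{B_A v}\,,
\]
the first summand is real (since $B_H = B_H^*$) and the second is purely imaginary (since $B_A = -B_A^*$). Comparing with the purely imaginary right-hand side yields $\ip{v}{B_H v} = 0$. The crucial step is now to deduce $B_H v = 0$ from the vanishing of this quadratic form: since $B_H \geq 0$ admits a factorization $B_H = L^* L$, one has $\|Lv\|_2^2 = \ip{v}{B_H v} = 0$, whence $Lv = 0$ and therefore $B_H v = L^* L v = 0$. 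Substituting back gives $B_A v = Bv - B_H v = i\mu v$, so $v$ is an eigenvector of $B_A$.

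The only step requiring any care is the implication $\ip{v}{B_H v} = 0 \Rightarrow B_H v = 0$, which is the place where positive semi-definiteness of $B_H$ is used in an essential way; without it (e.g.\ for a general indefinite Hermitian matrix) the conclusion would fail. Everything else is a one-line consequence of splitting $B$ into its Hermitian and anti-Hermitian parts.
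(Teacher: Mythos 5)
Your proof is correct. The paper states Proposition~\ref{prop:border} as a cited result (from~\cite{MMS16} and~\cite{AAC18}) and does not supply its own argument, so there is no in-paper proof to compare against; your derivation is the standard direct one. Both directions are handled properly: the backward direction is a one-line computation, and for the forward direction you correctly observe that $\Re\ip{v}{Bv}=\ip{v}{B_H v}$ must vanish when $Bv=i\mu v$, and that for a positive semi-definite Hermitian $B_H$ the vanishing of the quadratic form forces $B_H v=0$ (via $B_H=L^*L$), after which $B_A v=Bv-B_H v=i\mu v$ exhibits $v$ as the desired eigenvector of $B_A$. No gaps.
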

Hence, a positive conservative-dissipative matrix~$\mB$ is hypocoercive if and only if no eigenvector of the anti-Hermitian part lies in the kernel of the Hermitian part. 
The latter condition is well known in control theory, and there exists a range of equivalent characterizations, see e.g.~\cite[Proposition 1]{AAC18} and~\cite[Lemma 3.1]{MMS16}:
For example, positive conservative-dissipative matrices~$B\in\Cnn$ are hypocoercive if and only if  
\begin{equation}\label{HC-charact}
 \mbox{``No non-trivial subspace of $\ker(\BH)$ is invariant under $\BA$.'' }
\end{equation} 
The characterization~\eqref{HC-charact} implies the following result:
\begin{lm}\label{lm:HC+eps}
Let $B\in\Cnn$ be positive conservative-dissipative.
Then $B =\BA+\BH$ is hypocoercive if and only if $\epsilon \BA+\BH$ is hypocoercive for all $\epsilon\neq 0$.
\end{lm}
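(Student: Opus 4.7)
The plan is to reduce both implications to the invariant-subspace characterization~\eqref{HC-charact}, which applies to any positive conservative-dissipative matrix. The essential observation is that, for any nonzero real scalar $\epsilon$, the matrix $\epsilon\BA+\BH$ has the same positive conservative-dissipative structure as $B$, and the collection of subspaces invariant under $\epsilon\BA$ is identical to the collection of subspaces invariant under $\BA$.

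First I would verify that $\epsilon\BA+\BH$ is itself positive conservative-dissipative. Since $\epsilon\in\R\setminus\{0\}$, the scaled matrix $\epsilon\BA$ remains anti-Hermitian, so the Hermitian/anti-Hermitian decomposition of $\epsilon\BA+\BH$ is precisely $(\epsilon\BA,\BH)$; in particular its Hermitian part $\BH$ is unchanged and still positive semi-definite. This allows me to apply~\eqref{HC-charact} to both $B=\BA+\BH$ and $\epsilon\BA+\BH$: the former is hypocoercive iff no non-trivial subspace of $\ker(\BH)$ is invariant under $\BA$, while the latter is hypocoercive iff no non-trivial subspace of $\ker(\BH)$ is invariant under $\epsilon\BA$.

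The final step is the elementary observation that for any subspace $V\subseteq\C^n$ and any scalar $\epsilon\neq 0$, the condition $\epsilon\BA V\subseteq V$ is equivalent to $\BA V\subseteq\epsilon^{-1}V=V$. Hence the two invariant-subspace conditions coincide, and the desired equivalence follows immediately.

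Since the lemma is essentially a direct corollary of~\eqref{HC-charact}, I do not foresee any genuine obstacle. The only subtlety worth flagging is the implicit assumption that $\epsilon$ is real, which is precisely what is needed for $\epsilon\BA$ to be anti-Hermitian and for $\BH$ to remain the Hermitian part of the perturbed matrix, in agreement with the framework of~\eqref{ODE:epsA+C}.
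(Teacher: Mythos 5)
Your proof is correct and follows exactly the route the paper itself indicates: the lemma is stated immediately after \eqref{HC-charact} as a direct consequence of it, and your argument simply spells out why the two invariant-subspace conditions coincide for $\epsilon\neq 0$. The observation that $\epsilon$ must be real (so that $\epsilon\BA$ remains anti-Hermitian and $\BH$ stays the Hermitian part) is indeed the key hypothesis implicit in the paper's setup~\eqref{ODE:epsA+C}, and you are right to flag it.
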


But even if hypocoercivity of~$B$ is known, it is not trivial to obtain an exponential decay estimate~\eqref{HC-decay} with a quantitative (or even optimal) decay rate~$\mu$. 
Indeed, a simple energy estimate (i.e.\ pre-multiplying~\eqref{ODE:B} by $\bx^*$) or using Trotter's product formula only yields conservative-dissipativity of the system, but no decay: 
So, its propagator $P(t) =e^{-B t}$ satisfies at least $\|P(t)\|_2 \le1$ for all $t\ge0$. 

Let us also comment on the short time behavior. 
If a decay estimate~\eqref{short-t-decay1} holds with some exponent $a>1$, then an estimate of the form $\|P(t)\|_2 \le e^{-\mu t}$ (as it is typical for coercive matrices~$B$) is impossible (consider the Taylor expansion of $\|P(t)\|_2$ around $t=0$). 
In such cases the system can only be hypocoercive, along with an estimate~\eqref{HC-decay} with $c>1$.

Our conditions for hypocoercivity lead naturally to the notion of an {\em index of hypocoercivity}, which is a non-negative integer whenever $B$ is hypocoercive, and is zero if and only if $B$ is coercive (see~\S\ref{subsec:HC-index}).

%
%
%
%
\section{Hypocoercivity index and the short time decay of conservative-dissipative ODE systems}
\label{sec:HC-index}
In this section we shall present our main result, i.e.\ a concise characterization of the hypocoercivity index in terms of the short time behavior of conservative-dissipative ODE systems~\eqref{ODE:B}.

\subsection{Hypocoercivity index}
\label{subsec:HC-index}
First we recall from \cite[\S2.2]{AAC18} the definition of the hypocoercivity index:
\begin{defi}\label{def-HCIndex-full}
Let $B\in\Cnn$ be positive conservative-dissipative. 
Its \emph{hypocoercivity index (HC-index)}~$\mHC$ is defined as the smallest integer $m\in\N_0$ (if it exists) such that the matrix
\begin{equation} \label{HCI:Tm}
 T_m 
 :=\sum_{j=0}^{m} \BA^j \BH(\BA^*)^j
\end{equation} 
is positive definite. 
The matrix~$B$ is coercive iff $\mHC=0$, hypocoercive iff $\mHC\in\N_0$ (due to \makeblue{\cite[Lemma 2.4]{AAC18}}), 
and for non-hypocoercive matrices~$B$ we set $\mHC=\infty$. 
\end{defi}
The Hermitian matrix~$T_m$ in Definition~\ref{HCI:Tm} readily shows that the hypocoercivity index of a positive conservative-dissipative matrix~$B$ is invariant under unitary congruence transformations but, in general, not under similarity transformations (see~\cite{AAM21} for details).

In \cite[Lemma 2.3]{AAS15} it was proven that this index equals the smallest integer $m\in\N_0$ such that 
\begin{equation}\label{kalman1}
  \rank \big\{ \sqrt{\BH},\,\BA\sqrt{\BH},\ldots,\BA^{m}\sqrt{\BH} \big\}=n\,,
\end{equation}
which is often called {\em Kalman rank condition} (see also~\cite[{Proposition 1(B1)}]{AAC18}).
\begin{remark}\label{remark:HC+eps}
When considering rather $\eps \BA+\BH$, its hypocoercivity property and its index of hypocoercivity are independent of $\eps\ne0$, which follows trivially from~\eqref{kalman1}. 
Due to the above mentioned equality of the indices, the positive definiteness of $\sum_{j=0}^{\mHC} (\eps \BA)^j \BH(\eps \BA^*)^j$ is hence also independent of $\eps\ne0$.
\end{remark}
Next we shall present four (equivalent) variants of the Kalman rank condition:
\begin{lm}\label{lem:Kalman}
 Let $B\in\Cnn$ be positive conservative-dissipative.
 Consider the following four Kalman rank conditions:
\begin{subequations} \label{KRC} 
 \begin{align}
\exists m\in\N_0: &\quad\rank\{\sqrt{\BH},\ \BA\sqrt{\BH},\ \ldots,\ \BA^m\sqrt{\BH}\}=n\ , \label{KRC:a} 
\\
\exists m\in\N_0: &\quad\rank\{\sqrt{\BH},\ B\sqrt{\BH},\ \ldots,\ B^m\sqrt{\BH}\}=n \ , \label{KRC:b} 
\\ 
\exists m\in\N_0: &\quad\rank\{\sqrt{\BH},\ B^*\sqrt{\BH},\ \ldots,\ (B^*)^m\sqrt{\BH}\}=n \ , \label{KRC:b'}
\\ 
\exists m\in\N_0: &\quad\rank\{C_0,C_1,\ \ldots,\ C_m\}=n \text{ with } C_0:=\sqrt{\BH};\ C_{j+1}:=[C_j,\BA]\makeblue{=C_j B_A -B_A C_j},\ j\in\N_0\ . \label{KRC:c}
 \end{align}
The conditions~\eqref{KRC:a}--\eqref{KRC:c} are equivalent in the sense that, if there exists $m\in\N_0$ such that one condition holds, then the other three conditions hold as well for the same $m$. 
\end{subequations}
\end{lm}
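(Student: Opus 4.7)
The plan is to reformulate each of the four Kalman rank conditions~\eqref{KRC:a}--\eqref{KRC:c} as the statement that a certain subspace of $\C^n$ equals $\C^n$, and then to prove that these four subspaces in fact \emph{coincide} for every fixed $m\in\N_0$; the stronger ``same $m$'' conclusion of the lemma will then follow at once. Concretely, for a horizontal block matrix $[M_0,\ldots,M_m]$ one has $\rank[M_0,\ldots,M_m]=\dimension\bigl(\sum_{j=0}^m \range(M_j)\bigr)$, so condition~\eqref{KRC:a} is the statement $V_m^{(a)}=\C^n$ where
\[
 V_m^{(a)} := \sum_{j=0}^m \BA^{\,j}\,\range(\sqrt{\BH}),
\]
and analogously for $V_m^{(b)}$, $V_m^{(b')}$, $V_m^{(c)}$ assembled from the other three sequences. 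Since $\sqrt{\BH}$ is Hermitian positive semi-definite, $\range(\sqrt{\BH})=\range(\BH)=\kernel(\BH)^\perp$, a fact I will use freely.

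For $V_m^{(a)}=V_m^{(b)}$ I would argue by induction on $m$, using the recursions $V_{m+1}^{(a)}=\range(\BH)+\BA\,V_m^{(a)}$ and $V_{m+1}^{(b)}=\range(\BH)+B\,V_m^{(b)}$. Writing $B=\BH+\BA$ and invoking the elementary absorption property $\BH\,V_m^{(a)}\subseteq \BH\,\C^n=\range(\BH)$ gives
\[
 B\,V_m^{(a)} \;=\; \BH V_m^{(a)} + \BA V_m^{(a)} \;\subseteq\; \range(\BH)+\BA V_m^{(a)},
\]
while conversely $\BA V_m^{(a)}=(B-\BH)V_m^{(a)}\subseteq B V_m^{(a)}+\range(\BH)$. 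Hence both sums coincide modulo $\range(\BH)$; combined with the inductive hypothesis $V_m^{(a)}=V_m^{(b)}$, this yields $V_{m+1}^{(a)}=V_{m+1}^{(b)}$. The proof of $V_m^{(a)}=V_m^{(b')}$ is identical, with $B^*=\BH-\BA$ replacing $B=\BH+\BA$.

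For $V_m^{(c)}=V_m^{(a)}$ I would first derive, by induction on $j$ using the recursion $C_{j+1}=C_j\BA-\BA C_j$ together with Pascal's identity, the explicit Leibniz-type formula
\[
 C_j \;=\; \sum_{k=0}^{j}\binom{j}{k}(-1)^{j-k}\,\BA^{\,j-k}\sqrt{\BH}\,\BA^{\,k}.
\]
The inclusion $V_m^{(c)}\subseteq V_m^{(a)}$ then follows at once, since each summand maps $\C^n$ into $\BA^{\,j-k}\range(\sqrt{\BH})\subseteq V_j^{(a)}$. For the reverse inclusion I would solve the formula for its leading $k=0$ term to write
\[
 \BA^{\,j}\sqrt{\BH}\,v \;=\; (-1)^{j}\,C_j v \;+\; \sum_{k=1}^{j}\gamma_{j,k}\,\BA^{\,j-k}\sqrt{\BH}\,(\BA^{\,k}v),
\]
with explicit coefficients $\gamma_{j,k}$; each term in the sum belongs to $V_{j-1}^{(a)}$, which by strong induction on $j$ is already contained in $V_{j-1}^{(c)}\subseteq V_m^{(c)}$, so $V_m^{(a)}\subseteq V_m^{(c)}$ as required.

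The main technical step is the combinatorial bookkeeping needed to identify $C_j$ with the symmetrized polynomial above; once that formula is in hand, every other inclusion reduces to the splitting $B=\BH+\BA$ and the absorption property $\BH\,\range(\BH)\subseteq\range(\BH)$. No quantitative spectral information about $\BH$ is ever used beyond $\range(\sqrt{\BH})=\range(\BH)$, so the four equivalences in Lemma~\ref{lem:Kalman} are essentially purely algebraic identities between subspaces of $\C^n$.
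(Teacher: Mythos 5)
Your proposal is correct and follows essentially the same strategy as the paper: both reformulate the rank conditions as equalities of subspaces and prove, by induction on $m$, that all four subspaces coincide — using the splittings $B=\BH+\BA$, $B^*=\BH-\BA$ and the fact that the $\sqrt{\BH}$ factor lets you absorb lower-order contributions into $\range(\BH)$. The only cosmetic difference is that you write the full Leibniz-type expansion of $C_j$ whereas the paper merely notes the leading term $(-\BA)^{j}\sqrt{\BH}$ and the structure of the remainder.
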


\begin{proof}
In fact, we prove that for all $m\in\N_0$ the ranges of all four matrices in~\eqref{KRC} are equal,
\begin{subequations}\label{KRC:range}
 \begin{align}
 \range\{\sqrt{\BH},\ \BA\sqrt{\BH},\ \ldots,\ \BA^m\sqrt{\BH}\} 
&=\range\{\sqrt{\BH},\ B\sqrt{\BH},\ \ldots,\ B^m\sqrt{\BH}\} \label{KRC:range:a} 
\\
&=\range\{\sqrt{\BH},\ B^*\sqrt{\BH},\ \ldots,\ (B^*)^m\sqrt{\BH}\} \label{KRC:range:a'} 
\\
&=\range\{C_0,C_1,\ \ldots,\ C_m\} , \label{KRC:range:b}
 \end{align}
\end{subequations}
which can be done inductively:
 
The base case $m=0$ is trivial, since all four matrices in~\eqref{KRC:range} are equal to $\sqrt{\BH}$.
We start with the induction step for the equivalence of the first identity in~\eqref{KRC:range}:
Assume~\eqref{KRC:range:a} holds for some $m\in\N_0$.
\makeblue{Then,} the following representations \makeblue{hold:} $(B^{m+1} -\BA^{m+1})\sqrt{\BH} =\sum_{j=0}^m \BA^j\sqrt{\BH} X_j$, with appropriate matrices $X_j\in\Cnn$, and e.g., $X_m =\BH$.
Therefore 
\[
 \range\big( (B^{m+1} -\BA^{m+1})\sqrt{\BH}\big)
 \subset \range\{\sqrt{\BH},\ \BA\sqrt{\BH},\ \ldots,\ \BA^m\sqrt{\BH}\}
\]
which implies
 \begin{align*}
 &\range\{\sqrt{\BH},\ B\sqrt{\BH},\ \ldots,\ B^m\sqrt{\BH},\ B^{m+1}\sqrt{\BH}\}
\\
 &=\range\{\sqrt{\BH},\ \BA\sqrt{\BH},\ \ldots,\ \BA^m\sqrt{\BH},\ B^{m+1}\sqrt{\BH}\} 
\\
 &=\range\{\sqrt{\BH},\ \BA\sqrt{\BH},\ \ldots,\ \BA^m\sqrt{\BH},\ \BA^{m+1}\sqrt{\BH}\} \,.
 \end{align*}
Hence, $\range\{\sqrt{\BH},\ \BA\sqrt{\BH},\ \ldots,\ \BA^{m+1}\sqrt{\BH}\} =\range\{\sqrt{\BH},\ B\sqrt{\BH},\ \ldots,\ B^{m+1}\sqrt{\BH}\}$ follows. 

In the same way, for $m\in\N_0$, the identity 
\[
 \range\{\sqrt{\BH},\ \BA\sqrt{\BH},\ \ldots,\ \BA^m\sqrt{\BH}\} 
=\range\{\sqrt{\BH},\ B^*\sqrt{\BH},\ \ldots,\ (B^*)^m\sqrt{\BH}\}
\]
is proven.
 
Since the equivalence of $\range\{\sqrt{\BH},\ \BA\sqrt{\BH},\ \ldots,\ \BA^m\sqrt{\BH}\}$ and $\range\{C_0,C_1,\ \ldots,\ C_m\}$ follows very similarly, we only give the key steps:
Note that the term of $C_{j+1}$ with the leading maximum exponent of $\BA$ is of the form $(-\BA)^{j+1}\sqrt\BH$.
This allows to compute $C_{m+1} -(-\BA)^{m+1}\sqrt{\BH} =\sum_{j=0}^m \BA^j\sqrt{\BH} X_j$, with appropriate matrices $X_j\in\Cnn$ and, e.g., $X_m =(-1)^m (m+1)\BA$.
Therefore 
\[
 \range\big( C_{m+1} -(-\BA)^{m+1}\sqrt{\BH}\big) \subset \range\{\sqrt{\BH},\ \BA\sqrt{\BH},\ \ldots,\ \BA^m\sqrt{\BH}\}
\]
and the rest of the proof is as before.
\end{proof}

Next we shall present two (equivalent) variants of Definition~\ref{def-HCIndex-full} for the hypocoercivity index that are related to similar concepts in the literature: 
In~\cite[\S 2]{ArEr14}, the definition of the hypocoercivity index of a degenerate Fokker--Planck equation with linear drift involves a positive stable matrix $B$ and its Hermitian part $\BH \geq0$ (rather than the Hermitian part and anti-Hermitian part of $B$ as in Definition~\ref{def-HCIndex-full} here).
In order to connect these two situations we shall establish the equivalence of these two definitions in the subsequent lemma.

The condition $T_m>0$ from Definition~\ref{def-HCIndex-full} can also be related to \emph{H\"ormander's ``rank $r$'' bracket condition} for hypoellipticity, cf.\ \cite{Ho67}. 
In particular, in \cite{Vi09} iterated commutators were used to establish hypocoercivity of kinetic PDEs by constructing an appropriate Lyapunov functional. 
In Lemma~\ref{HCI:AE}, Equation~\eqref{HCC:c}, below we shall mimic condition (3.5) of \cite{Vi09} for the ODE-system~\eqref{ODE:B}.
\begin{lm}\label{HCI:AE}
Let $B\in\Cnn$ be positive conservative-dissipative.
Consider the following four hypocoercivity conditions:
 \begin{subequations}\label{HCC} 
 \begin{align}
  \exists m\in\N_0: &\quad 
  T_m :=\sum_{j=0}^m \BA^j \BH (\BA^*)^j >0\ , 
  \label{HCC:a}
\\
  \exists m\in\N_0: &\quad
  \widetilde{T}_m :=\sum_{j=0}^m B^j \BH (B^*)^j >0 \ ,
  \label{HCC:b} 
\\
  \exists m\in\N_0: &\quad
  \ttT_m :=\sum_{j=0}^m (B^*)^j \BH B^j >0 \ ,
  \label{HCC:b'} 
\\ 
  \exists m\in\N_0: &\quad
  \widehat{T}_m :=\sum_{j=0}^m C^*_j C_j >0 
  \text{ with } C_0:=\sqrt{\BH};\ C_{j+1}:=[C_j,\BA]\makeblue{=C_j B_A -B_A C_j},\ j\in\N_0\ .\label{HCC:c} 
 \end{align}
\end{subequations}
The conditions~\eqref{HCC:a}--\eqref{HCC:c} are equivalent in the sense that, if there exists $m\in\N_0$ such that one condition holds, then the other three conditions hold as well for the same $m$. 
\end{lm}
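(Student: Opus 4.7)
The plan is to reduce each condition in~\eqref{HCC} to the corresponding Kalman rank condition in~\eqref{KRC} (for the same $m$), and then invoke Lemma~\ref{lem:Kalman}. The key observation is that each of the four matrices $T_m$, $\widetilde{T}_m$, $\ttT_m$, $\widehat{T}_m$ has a natural factorization of the form $M M^*$, where $M$ is (a block row built from) the very collection of matrices that appears in the analogous Kalman rank condition. Since $M M^* > 0$ is equivalent to $\range(M) = \C^n$ (i.e.\ to $M$ having full row rank), the equivalence with the Kalman rank conditions is immediate.

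Concretely, I would set
\[
 M := [\sqrt{\BH},\ \BA\sqrt{\BH},\ \ldots,\ \BA^m\sqrt{\BH}],
 \qquad
 \widetilde{M} := [\sqrt{\BH},\ B\sqrt{\BH},\ \ldots,\ B^m\sqrt{\BH}],
\]
and analogously $\ttilde{M}$ with $B^*$ in place of $B$. Using $(\BA^j\sqrt{\BH})^* = \sqrt{\BH}(\BA^*)^j$ (because $\sqrt{\BH}$ is Hermitian), and similarly for $B$ and $B^*$, one checks by direct block-multiplication that
\[
 T_m = MM^*,\qquad \widetilde{T}_m = \widetilde{M}\widetilde{M}^*,\qquad \ttT_m = \ttilde{M}\ttilde{M}^*.
\]
Hence~\eqref{HCC:a}, \eqref{HCC:b}, \eqref{HCC:b'} are exactly~\eqref{KRC:a}, \eqref{KRC:b}, \eqref{KRC:b'} respectively, and by Lemma~\ref{lem:Kalman} they hold for the same $m$.

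For~\eqref{HCC:c}, the step that requires a small argument (and is the only mildly non-obvious point) is that every $C_j$ is Hermitian. This follows by induction: $C_0 = \sqrt{\BH}$ is Hermitian, and if $C_j^* = C_j$ then, using $\BA^* = -\BA$,
\[
 C_{j+1}^* = (C_j\BA - \BA C_j)^* = \BA^* C_j - C_j \BA^* = -\BA C_j + C_j \BA = C_{j+1}.
\]
With this, $C_j^* C_j = C_j^2 = C_j C_j^*$, so setting $N := [C_0, C_1, \ldots, C_m]$ we obtain $\widehat{T}_m = \sum_{j=0}^m C_j C_j^* = N N^*$, which is positive definite iff $\range(N) = \C^n$, i.e.\ iff~\eqref{KRC:c} holds for the same $m$.

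The main (and really only) obstacle is the Hermiticity of the $C_j$; once that is in hand, the four conditions in~\eqref{HCC} correspond one-to-one with the four conditions in~\eqref{KRC}, and Lemma~\ref{lem:Kalman} concludes that any one of them holding for a given $m$ forces the other three to hold for that same $m$.
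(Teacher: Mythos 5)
Your argument is correct and follows the same route as the paper: reduce each positive-definiteness condition to the corresponding Kalman rank condition for the same $m$ and then invoke Lemma~\ref{lem:Kalman}. The only cosmetic difference is that you write out the elementary factorization $T_m = MM^*$ (and analogously for the other three) and the equivalence ``$MM^*>0 \iff \range(M)=\C^n$'' directly, whereas the paper cites \cite[Lemma 2.3]{AAS15} for exactly this fact; both likewise note (by the same one-line induction) that $C_j^*=C_j$.
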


\begin{proof}
According to~\cite[Lemma 2.3]{AAS15}, each of these four matrix inequalities~\eqref{HCC:a}--\eqref{HCC:c} is equivalent to the corresponding Kalman rank conditions~\eqref{KRC:a}--\eqref{KRC:c} where we used in the last case that $C_j^*=C_j$ (verifiable by a simple induction).
Moreover, the Kalman rank conditions~\eqref{KRC:a}--\eqref{KRC:c} are equivalent due to Lemma~\ref{lem:Kalman}.
\end{proof}
 
This lemma shows that the hypocoercivity index can equally be defined as the smallest integer $m\in\N_0$ such that $\widetilde{T}_m >0$, $\ttT_m >0$, or $\widehat{T}_m >0$. 
Hence, it also gives the smallest necessary number of iterated commutators of $\BH\ge0$ with the matrix $\BA$ such that their ranges span all of $\C^n$ --- in the spirit of H\"ormander's hypoellipticity theorem. 

As an example we consider two matrices with the same Hermitian part~$\BH=\diag(0,0,1,1)$ such that $\rank(\ker(\BH))=2$ but two different anti-Hermitian parts such that one and, respectively, two iterated commutators are needed:
\begin{ex}
Consider
\begin{align*}
B^{(1)}
&=
\begin{pmatrix}
 0 & 0 & 0 & 1 \\
 0 & 0 & 1 & 0 \\
 0 &-1 & 1 & 0 \\
-1 & 0 & 0 & 1 
\end{pmatrix}
\text{ such that } 
&\BA^{(1)} 
&=
\begin{pmatrix}
 0 & 0 & 0 & 1 \\
 0 & 0 & 1 & 0 \\
 0 &-1 & 0 & 0 \\
-1 & 0 & 0 & 0 
\end{pmatrix}
 , \quad 
&\big[\sqrt {\BH^{(1)}},\BA^{(1)} \big] 
&=
\begin{pmatrix}
 0 & 0 & 0 &-1 \\
 0 & 0 &-1 & 0 \\
 0 &-1 & 0 & 0 \\
-1 & 0 & 0 & 0 
\end{pmatrix}
 ,
\intertext{and} 
B^{(2)} 
&=
\begin{pmatrix}
 0 & 1 & 0 & 0 \\
-1 & 0 & 1 & 0 \\
 0 &-1 & 1 & 0 \\
 0 & 0 & 0 & 1 
\end{pmatrix}
\text{ such that }
&\BA^{(2)}
&=
\begin{pmatrix}
 0 & 1 & 0 & 0 \\
-1 & 0 & 1 & 0 \\
 0 &-1 & 0 & 0 \\
 0 & 0 & 0 & 0 
\end{pmatrix}
 ,
&\big[\sqrt {\BH^{(2)}},\BA^{(2)}\big] 
&=
\begin{pmatrix}
0 & 0 & 0 & 0 \\
0 & 0 & -1 & 0 \\
0 & -1 & 0 & 0 \\
0 & 0 & 0 & 0 
\end{pmatrix}
 .
\end{align*}
Hence $\rank\big\{\sqrt {\BH^{(1)}},\,\big[\sqrt {\BH^{(1)}},\BA^{(1)} \big] \big\}=4$, but $\rank\big\{\sqrt {\BH^{(2)}},\,\big[\sqrt {\BH^{(2)}},\BA^{(2)} \big] \big\}=3$.
Thus, by Lemma~\ref{lem:Kalman}, $\mHC(B^{(1)})=1$ and $\mHC(B^{(2)})=2$. 
\end{ex}

While finite dimensional positive conservative-dissipative matrices are hypocoercive if and only if they have a finite hypocoercivity index, this is not true in the infinite dimensional case: 
\begin{ex}
Consider a block-diagonal ``ODE'', with each block of the form
\begin{equation}\label{E_k}
 E_k
=\begin{pmatrix}
 0  & 1 & & & \\
-1 & \ddots & \ddots & & \\
   & \ddots & \ddots & 1 & \\
   & & -1 & 0 & 1 \\
   & & & -1 & 1
 \end{pmatrix} \in \R^{k\times k}
\ , \quad
(E_k)_H =\diag(0,0,\ldots,0,1)
\ , \quad
(E_k)_A =\toeplitz(-1,0,1)
\ .
\end{equation}
Then, the matrices $E_k$, $k\in\N$ are positive conservative-dissipative.   Evidently, $(E_k)_H \bv  = 0$ if and only if $\bv  = (v_1,\dots,v_{k-1},0)$.
For such a vector, 
\[
 (E_k)_A \bv  = (v_2,v_3-v_1,\dots, -v_{k-2},-v_{k-1})\ .
\]
A simple inductive argument shows that if $(E_k)_A \bv  = \lambda \bv $ for any $\lambda \neq 0$, then $\bv  = 0$. 
By Proposition~\ref{prop:border}, $E_k$ has no eigenvalue on $i\R$.
Hence, the matrices~$E_k$, $k\in\N$ are hypocoercive, satisfying the estimate 
\begin{equation} \label{est:Ek}
 \| e^{-E_k t} \|_2 \leq c_k e^{-\lambda_k t}
\ , \qquad
 t\geq 0
\ .
\end{equation}
for constants $c_k\geq 1$ and $\mu_k>0$, $k\in\N$.
We now make a simple rescaling, defining $\tE_k := r_k E_k$ for $r_k>0$ to be chosen below, and consider the propagator $P(t)$ for $\diag(\tE_1,\tE_2,\ldots)$, and let $\|\cdot\|_2$ be the spectral norm on $\ell^2$.
By \eqref{est:Ek},
\[
 \|e^{-\tE_k}\|_2 
= \|e^{-r_kE_k}\|_2 
\leq c_k e^{-r_k\mu_k} 
\leq \frac1e \quad \text{for}\quad r_k 
= \frac{1+ \log c_k}{\mu_k}\ .
\]
Therefore, making this choice of $r_k$, $\|P(1)\|_2 \leq 1/e$, so that \eqref{propagator_norm:waiting_time} is satisfied for $t_0 =1$. 
Thus this infinite dimensional system is hypocoercive. 

We now show that the combined system has an infinite hypocoercivity index. 
For this we can ignore the scaling and work with the original matrices $E_k$.
Let $\be_j$ denote the $j$-th standard basis vector in $\C^k$. 
Then, $(E_k)_H =\be_k \be_k^*$ and $T_m$ as specified in~\eqref{HCI:Tm} is given by
\[
 T_m = \sum_{j=0}^m (-1)^j (E_k)_A^j \be_k \be_k^* (E_k)_A^j\ .
\]
For $m=1,\ldots,k-1$, it is evident that
\[
 |\be_m^* (E_k)_A^j \be_k|^2 = 0 
 \quad \text{for}\quad j < k- m 
 \quad \text{but}\quad |\be_m^* (E_k)_A^{k-m} \be_k|^2 = 1 \ .
\]
Hence, $\mHC(E_k) =k-1$.
Thus, the combined system $\diag(E_1,E_2,\ldots)$ can not have a finite $\mHC$.  
\end{ex}

\subsection{Short time decay of conservative-dissipative ODE systems}
\label{sec:HC-interpr}
Here we shall prove that the hypocoercivity index of a conservative-dissipative ODE system characterizes the decay of its propagator norm for short time. 
We denote the solution semigroup pertaining to~\eqref{ODE:B} by $P(t):=e^{-B t}\in \Cnn$, and its spectral norm by $\|P(t)\|_2 := \sup\{\|P(t){\bf x}\|_2 \ :\ \|{\bf x}\|_2 =1\}$, which is also the largest singular value of $P(t)$.
Its short time decay is related to the hypocoercivity index as follows:
\begin{thm}\label{th:HC-decay}
Let the ODE system~\eqref{ODE:B} be conservative-dissipative with (positive conservative-dissipative) matrix $B\in\Cnn$. 
\begin{enumerate}[label=(\alph*)]
\item \label{th:HC-decay:a}
The (positive conservative-dissipative) matrix~$\mB$ is hypocoercive (with hypocoercivity index $\mHC\in\N_0$)
if and only if
\begin{equation}\label{short-t-decay}
  \|e^{-\mB t}\|_2 = 1-ct^a+\bigO(t^{a+1})\quad\text{ for } t\in[0,\epsilon), 
\end{equation}
for some $a,c,\epsilon>0$. In this case, necessarily $a= 2m_{HC}+1$.
%
\item \label{th:HC-decay:b} 
\makered{
Moreover, for $\mHC\geq 1$, the optimal multiplicative factor~$c$ in~\eqref{short-t-decay} is given by
\begin{equation}\label{c:mHC}
\begin{split}
 c
&= \frac1{({2{\mHC}+1})! \binom{2{\mHC}}{\mHC}} \min_{\bx\in\ker\big(\tttT_{\mHC-1}\big), \ \|\bx\|=1} \ip{\bx}{(B^*)^\mHC \BH B^\mHC \bx} 
\\
&= \frac1{({2{\mHC}+1})! \binom{2{\mHC}}{\mHC}} \min_{\bx\in\ker\big(\tttT_{\mHC-1}\big), \ \|\bx\|=1} \ip{\bx}{(\BA^*)^\mHC \BH \BA^\mHC \bx} ,
\end{split}
\end{equation}
and for $\mHC=0$ we have $c=\min_{\|\bx\|=1} \ip{\bx}{\BH\bx}$.
}
\end{enumerate}
\end{thm}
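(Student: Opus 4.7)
The plan is to work with the Hermitian matrix $Q(t):=e^{-\mB^*t}e^{-\mB t}$, so that $\|e^{-\mB t}\|_2^2=\lambda_{\max}(Q(t))$, and with its deficit $R(t):=I-Q(t)$. Differentiating and using that $\mB^*$ commutes with $e^{-\mB^*s}$ (and analogously for $\mB$) yields $\ddt Q(t)=-2\,e^{-\mB^*t}\BH\, e^{-\mB t}$, which integrates to
\begin{equation*}
 R(t)\;=\;2\int_0^t e^{-\mB^*s}\,\BH\, e^{-\mB s}\,\ds\;\ge\;0\,,
\end{equation*}
and hence $1-\|e^{-\mB t}\|_2^2=\lambda_{\min}(R(t))=\min_{\|\bx\|_2=1}\,2\int_0^t\|\sqrt{\BH}\,e^{-\mB s}\bx\|_2^2\,\ds$. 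Combined with $1-\|e^{-\mB t}\|_2\sim\tfrac12(1-\|e^{-\mB t}\|_2^2)$ as $t\to 0^+$, the theorem reduces to a short-time analysis of this variational quantity.

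The reverse direction of~\ref{th:HC-decay:a} follows by contraposition: if $\mB$ is not hypocoercive, Proposition~\ref{prop:border} supplies $\bv\ne 0$ with $\BH\bv=0$ and $\BA\bv=i\omega\bv$; then $e^{-\mB t}\bv=e^{-i\omega t}\bv$ gives $\|e^{-\mB t}\bv\|_2=\|\bv\|_2$, which together with $\|e^{-\mB t}\|_2\le 1$ forces $\|e^{-\mB t}\|_2\equiv 1$ and contradicts \eqref{short-t-decay} for any $c>0$.

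For the forward direction and part~\ref{th:HC-decay:b}, the coercive case $\mHC=0$ is immediate from $R(t)=2t\BH+\bigO(t^2)$ with $\BH>0$. For $\mHC\ge 1$ and any unit $\bx\in\ker\tttT_{\mHC-1}$ one has $\sqrt{\BH}\mB^j\bx=0$ for $j<\mHC$, so expanding $\sqrt{\BH}e^{-\mB s}\bx=\sum_{k\ge\mHC}\tfrac{(-s)^k}{k!}\sqrt{\BH}\mB^k\bx$ and integrating termwise gives
\begin{equation*}
 \ip{\bx}{R(t)\bx}\;=\;\frac{2\binom{2\mHC}{\mHC}}{(2\mHC+1)!}\,t^{2\mHC+1}\,\|\sqrt{\BH}\mB^\mHC\bx\|_2^2+\bigO(t^{2\mHC+2})\,.
\end{equation*}
Minimising over $\ker\tttT_{\mHC-1}$ already forces $a=2\mHC+1$ in \eqref{short-t-decay} and yields the upper bound $c\le\binom{2\mHC}{\mHC}/(2\mHC+1)!$ times the stated minimum. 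The equivalence of the two expressions in \eqref{c:mHC} follows from the identity $\mB^\mHC\bx=\BA^\mHC\bx$ on $\ker\tttT_{\mHC-1}$, proved by induction on $j$ using $\mB^{j}\bx=(\BA+\BH)\mB^{j-1}\bx=\BA\mB^{j-1}\bx=\BA^{j}\bx$, where $\BH\mB^{j-1}\bx=0$ by the kernel condition.

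The main obstacle is the matching lower bound on $\lambda_{\min}(R(t))$: the naive restriction to $\ker\tttT_{\mHC-1}$ overestimates $c$ by precisely $\binom{2\mHC}{\mHC}^2$. For example, on the $\mHC=1$ matrix $\mB=\bigl(\begin{smallmatrix}0&-1\\1&1\end{smallmatrix}\bigr)$ the restriction bound is $c\le 1/3$, whereas the theorem's value is $c=1/12$. The discrepancy is that the $\lambda_{\min}$-eigenvector of $R(t)$ is perturbed off $\ker\tttT_{\mHC-1}$ by an $\bigO(t)$ component, and this perturbation is captured by the Schur complement
\begin{equation*}
 S(t)\;=\;R_{11}(t)-R_{12}(t)\,R_{22}(t)^{-1}\,R_{21}(t)
\end{equation*}
of $R(t)$ relative to $\C^n=\ker\tttT_{\mHC-1}\oplus(\ker\tttT_{\mHC-1})^\perp$. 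Since $\lambda_{\min}(R(t))$ has strictly higher order of smallness in $t$ than any positive eigenvalue of $R_{22}(t)$, standard perturbation theory yields $\lambda_{\min}(R(t))=\lambda_{\min}(S(t))\bigl(1+\smallO(1)\bigr)$ as $t\to 0^+$; for $\mHC=1$ the two contributions combine as $\tfrac{2}{3}-\tfrac{1}{2}=\tfrac{1}{6}$, producing exactly the missing factor $\binom{2}{1}^{-2}=1/4$ and hence $c=1/12$. For general $\mHC$ I would iterate this Schur reduction along the full filtration $\ker\tttT_0\supsetneq\cdots\supsetneq\ker\tttT_\mHC=\{0\}$, peeling off one layer at a time and tracking how factorials and central binomials combine at each step to yield the sharp constant $c=1/\bigl((2\mHC+1)!\binom{2\mHC}{\mHC}\bigr)$ times the stated minimum.
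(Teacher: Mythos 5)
Your setup (working with $R(t)=I-e^{-\mB^*t}e^{-\mB t}=2\int_0^t e^{-\mB^*s}\BH e^{-\mB s}\,ds$ and reducing to $\lambda_{\min}(R(t))$) and your treatment of part~\ref{th:HC-decay:a} are both correct. The contrapositive proof of the reverse implication via Proposition~\ref{prop:border} is actually a cleaner route than the paper's, which argues by producing unit vectors in $\ker\tttT_{\tm-1}$ for arbitrarily large $\tm$ and showing that the resulting Taylor behaviour is incompatible with \eqref{short-t-decay}; both are fine. Your single-trajectory computation for $\bx_0\in\ker\tttT_{\mHC-1}$ is right and matches \eqref{c:lower_bound}, and the algebraic identity $\mB^{\mHC}\bx_0=\BA^{\mHC}\bx_0$ on that kernel is exactly \eqref{Bmx:BAmx}. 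You also correctly diagnose the central difficulty: that computation overshoots $c$ by the factor $\binom{2\mHC}{\mHC}^2$.

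The genuine gap is in how you close that factor. For $\mHC=1$ your Schur-complement reduction is fine and gives the right answer, and the accompanying perturbation argument ($\lambda_{\min}(R)=\lambda_{\min}(S)(1+\smallO(1))$ via the $LDL^*$ factorisation with $L\to I$) is legitimate because $\|R_{12}(t)R_{22}(t)^{-1}\|=\bigO(t^{2}\cdot t^{-1})=\bigO(t)\to 0$. But already for $\mHC\geq 2$ the \emph{single} Schur complement w.r.t. $\ker\tttT_{\mHC-1}\oplus(\ker\tttT_{\mHC-1})^\perp$ does not yield this: the complement $(\ker\tttT_{\mHC-1})^\perp$ contains vectors with Taylor decay of every intermediate order $t,t^3,\dots,t^{2\mHC-1}$, so $\lambda_{\min}(R_{22}(t))$ can be as small as $\bigO(t^{2\mHC-1})$ while $\|R_{12}(t)\|=\bigO(t^{\mHC+1})$, giving $\|R_{12}R_{22}^{-1}\|=\bigO(t^{2-\mHC})$, which for $\mHC\geq 2$ does not go to~$0$. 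Thus the factorisation $L\to I$ fails and the identity $\lambda_{\min}(R)=\lambda_{\min}(S)(1+\smallO(1))$ is no longer automatic. You would indeed need to peel off the filtration $\ker\tttT_0\supseteq\ker\tttT_1\supseteq\cdots\supseteq\ker\tttT_{\mHC-1}$ one layer at a time, tracking the induced corrections at each level, and you have asserted but not carried out this iteration or verified that the constants compound to $1\big/\big((2\mHC+1)!\binom{2\mHC}{\mHC}\big)$. This multi-layer bookkeeping is precisely the technical heart of the proof. The paper avoids Schur complements entirely: for the sharp upper bound on the propagator norm it uses a purely combinatorial sum-of-squares identity (Lemma~\ref{lm:sum-of-squares}) that re-expresses the Taylor series of $g(\bx;t)$ so that all contributions of order $t,\dots,t^{2\mHC-1}$ organise themselves into manifestly non-positive quadratic forms, combined with a decomposition of the unit sphere into the layers $\setC_0,\dots,\setC_{\mHC}$ and a $\delta\to 0$ limit; for the matching lower bound it constructs a one-parameter family of normalised initial data $\bx_\tau=\big(\bx_0+\sum_{\ell=1}^{\mHC}b_\ell\tau^\ell\mB^\ell\bx_0\big)/\|\cdots\|$ evaluated along the diagonal $t=\tau$ (Lemmas~\ref{lm:c:upper_bound}--\ref{lm:g}). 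To complete your proposal you should either carry out the iterated Schur reduction explicitly (including the order analysis at every stage of the filtration and the compounding of constants), or replace it with one of these more algebraic devices.
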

\begin{proof}
\makered{Since the proof for the coercive case, i.e. $\mHC=0$, is trivial, we shall confine ourselves now to $\mHC\geq 1$:
}

\smallskip
Part~\ref{th:HC-decay:a}: 
For sufficiently small time $t_0>0$, there exists a real analytic function $\Phi: [0,t_0]\to\R$ such that 
\begin{equation}\label{prop-norm-analytic}
  \|P(t)\|_2 =\Phi(t)\quad \mbox{ for all } t\in[0,t_0]\,,
\end{equation}
e.g. see~\cite[Lemma 1]{Ko01}.
\makeblue{Alternatively, the statement can be derived from \cite[Part II.\S6]{Ka95} or} \cite[Theorem 4.3.17]{HiPr05}.\\

\makeblue{{\bf For the forward direction} we assume that $B$ has a finite HC-index $m_{HC}\in\N$. Hence} we are left to prove that the Taylor expansion of $\|P(t)\|_2$ has the form~\eqref{short-t-decay}. 
\makeblue{This proof will be split into two separate parts, the lower and the (technically more subtle) upper bound. }

\medskip
\makeblue{\underline{Lower bound:}} First, we shall prove that there exists~$c_1>0$ such that
\begin{subequations} \label{short-t-decay:lower:1+2}
\begin{equation} \label{short-t-decay:lower}
 \|P(t)\|_2 \geq 1-c_1 t^a +\bigO(t^{a+1})
 \quad\text{as } t\to 0^+ \ 
\end{equation}
\makeblue{or, equivalently,}
\begin{equation} \label{short-t-decay:lower2}
\makeblue{
 \|P(t)\|_2^2 \geq 1-2c_1 t^a +\bigO(t^{a+1})
 \quad\text{as } t\to 0^+ \ 
}
\end{equation}
\end{subequations}
\makeblue{holds with $a=2\mHC+1$.} \footnote{\makeviolet{In inequalities, such as~\eqref{short-t-decay:lower:1+2}, the Landau symbol $\bigO$ is used in the sense that there exist constants $\epsilon,M>0$ such that $\|P(t)\|_2 \geq 1-c_1 t^a -M\ t^{a+1}$ for $t\in[0,\epsilon)$.}}
\makeblue{To this end}
suppose $\bx_0$ is any unit vector such that for some $m\in \N$ \makeblue{(\emph{not} necessarily $m=m_{HC}$), } 
\begin{equation}\label{bo3}
\bx_0 
 \in \ker \left(\sum_{j= 0}^{m-1} (B^*)^j B_H B^j\right) 
 =\ker\big(\ttT_{m-1}\big) \,,
\end{equation}
which is equivalent to
\begin{equation}\label{bo1}
\|\sqrt{B_H}B^j \bx_0\|_2 =0 \quad{\rm for} \quad 0 \leq j \leq m-1 \  .
\end{equation}
\makeblue{If such an $\bx_0$ exists, we will show for the corresponding trajectory} that
\begin{equation}\label{bo2}
\|P(t)\bx_0\|_2^2 
= 1 - t^{2m+1} \frac{2}{(2m+1)!} \binom{2m}{m} \ip{ \bx_0}{ (B^*)^m B_H B^m \bx_0} + \mathcal{O}(t^{2m+2})\ .
\end{equation}
Note that our hypotheses allow the possibility that $\langle\bx_0, (B^*)^m B_H B^m \bx_0\rangle= 0$ in which case we have simply $\|P(t)\bx_0\|_2^2 = 1 +\mathcal{O}(t^{2m+2})$.
 
\makeblue{Now take $\bx_0$ as in Equation~\eqref{bo3} with $\|\bx_0\|=1$, and define $q(t) := \|P(t) \bx_0\|^2 $. Standard theory gives that (see also Equation~\eqref{eq:energy})
\begin{equation} \label{eq:1}
 {q}'(t) 
= -2 \|\sqrt{B_H} P(t)\bx_0\|^2
= -2 \ip{ \sqrt{B_H} P(t)\bx_0}{ \sqrt{B_H} P(t)\bx_0}.
\end{equation}
Using the assumption on $\bx_0$, we have that $\sqrt{B_H} P(t) \bx_0 = 
\sum_{k=m}^{\infty} \frac{t^k}{k!} \sqrt{B_H} (-B)^k \bx_0$. 
Substituting this in \eqref{eq:1} gives
\begin{equation} \label{eq:2}
 {q}'(t) 
= -2 \frac{t^{2m}}{m!\cdot m!} \ip{ \sqrt{B_H} B^m \bx_0}{ \sqrt{B_H} 
B^m \bx_0} + \sum_{k=2m+1}^{\infty} u_k t^k 
\end{equation}
for some scalars $u_k$. 
Since this sequence converges absolutely (The expression in \eqref{eq:1} is real analytic), we can integrate this expression term-wise to obtain~$q(t)$. 
Using that $q(0)=\|\bx_0\|^2=1$, we find
\begin{equation}\label{eq:3}
 q(t) 
= 1 -\frac{2}{2m+1} \frac{t^{2m+1}}{m!\cdot m!} \ip{ \sqrt{B_H} B^m \bx_0}{ \sqrt{B_H} B^m \bx_0} + \sum_{k=2m+1}^{\infty} \frac{u_k}{k+1} t^{k+1},
\end{equation}
which shows \eqref{bo2}. 
}

\makeblue{Now, since the positive conservative-dissipative matrix $B$ is hypocoercive with HC-index $\mHC \in\N_0$, there exists a normalized vector $\bx_0$ such that
\begin{equation}\label{x0}
\|\sqrt{B_H}B^j \bx_0\|_2 =0 \quad\text{for } 0 \leq j \leq \mHC-1,
\quad\text{and } 
\sqrt{\BH} B^{\mHC}\bx_0\ne 0\,, 
\end{equation}
but none that would satisfy instead also $\sqrt{\BH} B^{\mHC}\bx_0 =0$. 
Hence, for such $\bx_0 \in \ker\big(\ttT_{\mHC-1}\big)$ the identity 
\eqref{bo2} holds with $m=\mHC$. 
}
%
%
%
%
\makeblue{Finally, taking the supremum over all initial conditions $\bx_0\in\ker\big(\ttT_{\mHC-1}\big),\ \|\bx_0\|_2=1$ (which is a closed set) yields the estimates~\eqref{short-t-decay:lower} and \eqref{short-t-decay:lower2} with $a=2\mHC+1$ and }
\begin{equation} 
\begin{split} \label{c:lower_bound}
 c_1 
:=& \frac12 \min_{ \bx_0\in\ker\big(\tttT_{\mHC-1}\big),\ \|\bx_0\|_2=1} \tfrac2{(2\mHC+1)!}\binom{2\mHC}{\mHC} \ip{ \bx_0}{ (B^*)^{\mHC}B_H B^\mHC \bx_0}
\\
=& \frac1{(2\mHC+1)!}\binom{2\mHC}{\mHC}  \min_{ \bx_0\in\ker\big(\tttT_{\mHC-1}\big),\ \|\bx_0\|_2=1} \ip{ \bx_0}{ (\BA^*)^{\mHC}B_H \BA^\mHC \bx_0}
\ .
\end{split}
\end{equation}
In the last step we used
\begin{equation} \label{Bmx:BAmx}
 B^{\mHC}\bx_0 
=\BA B^{\mHC-1}\bx_0  
=\ldots
=\BA^{\mHC}\bx_0 
\,,
\end{equation}
since all other terms vanish due to~$\bx_0\in\ker\big(\ttT_{\mHC-1}\big)$.
Due to~\eqref{x0}, $c_1>0$. 

\makered{In the proof of Part~\ref{th:HC-decay:b} we shall actually improve this constant by the factor $\binom{2\mHC}{\mHC}^{-2}$, see Lemma~\ref{lm:c:upper_bound}.
}

\medskip
\makeblue{\underline{Upper bound:}} Second, we prove that there exists~$c_2>0$ such that
\begin{subequations} \label{short-t-decay:upper:1+2}
\begin{equation} \label{short-t-decay:upper}
 \|P(t)\|_2 \leq 1-c_2 t^a +\bigO(t^{a+1})
 \quad\text{as } t\to 0^+ \ 
\end{equation}
\makeblue{or, equivalently,}
\begin{equation} \label{short-t-decay:upper2}
\makeblue{
 \|P(t)\|_2^2 \leq 1-2c_2 t^a +\bigO(t^{a+1})
 \quad\text{as } t\to 0^+ \ 
}
\end{equation}
\end{subequations}
\makeblue{holds with $a=2\mHC+1$.}
\footnote{In inequalities, such as~\eqref{short-t-decay:upper:1+2}, the Landau symbol $\bigO$ is used in the sense that there exist constants $\epsilon,M>0$ such that $\|P(t)\|_2 \leq 1-c_2 t^a +M\ t^{a+1}$ for $t\in[0,\epsilon)$.}
Here, we consider the case $\mHC =1$ whereas the general case $\mHC\in\N$ is treated in Appendix~\ref{app:short-time-decay}.

\makeblue{If matrix~$B$ has hypocoercivity index $\mHC=1$,  there exists $\kappa>0$ such that 
\begin{equation}\label{HC:kappa:mHC_1}
 \ttT_\mHC 
=\sum_{j=0}^{\mHC} (B^*)^j \BH B^j 
=\BH + B^* \BH B 
\geq \kappa I >0 \,. 
\end{equation}
Since $\mHC =1$, $B^*B_HB$ is positive definite on $\ker(B_H)$. 
For $\bx\in\Cn$ with $\|\bx\|=1$, we define
\begin{equation} \label{lambda_x:mu_x:mHC_1}
 \lambda_\bx :=\ip{\bx}{\BH \bx}\geq 0\ , \quad
 \mu_\bx :=\ip{\bx}{B^* \BH B \bx}\geq 0\ , \quad
 \text{such that } \lambda_\bx +\mu_\bx \geq \kappa >0\ .
\end{equation}

\medskip
Note that $\|P(t)\bx_0\|_2^2 = \ip{ \bx_0}{ Q(t)\bx_0}$ where
\begin{equation}\label{Q-expans}
 Q(t)
:= P^*(t)\,P(t) = e^{-B^* t}\, e^{-B t}
= \sum_{j=0}^{\infty} \frac{t^j}{j!} \sum_{k=0}^j \binom{j}{k} (-B^*)^k (-B)^{j-k} \ .
\end{equation}
Since ${ \big\Vert \sum_{k=0}^j \binom{j}{k} (-B^*)^k (-B)^{j-k}\big\Vert_2} \leq (2\|B\|_2)^j$, the Taylor series for the matrix family $Q(t)$ converges uniformly on bounded $t$-intervals.
Hence we have 
\begin{equation}\label{Q:norm}
 \|P(t)\|_2^2 
=\|Q(t)\|_2 
=\|Q_j(t)\|_2 +\bigO(t^{j+1})\,,
\end{equation}
where $Q_j(t)$ denotes the Taylor expansion~\eqref{Q-expans}, but truncated after the $t^j$-term.
Consider
\begin{equation}\label{Q3}
Q_3(t)
=\sum_{j=0}^{3} \frac{t^j}{j!}\ U_j 
=I +t U_1 +\frac{t^2}{2!} U_2 +\frac{t^3}{3!} U_3 .
\end{equation}
Then, its spectral norm satisfies
\begin{equation}\label{Q3:norm}
 \|Q_3(t)\|_2
:= \sup_{\|\bx_0\|=1} \|Q_3(t)\bx_0\|_2 
= \sup_{\|\bx_0\|=1} \ip{ \bx_0}{ Q_3(t)\bx_0} ,
\end{equation}
for sufficiently small $t\geq 0$.
The latter identity holds, since the self-adjoint matrix $Q_3(t)$ satisfies $Q_3(0)=I$.

}

Let $\bx$ be a unit vector, \makeblue{we estimate each expression $\ip{\bx}{U_j \bx}$, $j=1,2,3$ separately as
\[
 \ip{ \bx}{ U_1\bx} 
=-2\ip{ \bx}{ B_H\bx} 
=-2 \lambda_\bx \quad\text{using}\quad 
 \lambda_\bx 
= \ip{ \bx}{ B_H \bx} \geq 0 \ .
\]
Next, since $U_2 = 2(B^*B_H + B_H B)$, so that by the Cauchy--Schwarz inequality,
\begin{equation*}
 \ip{ \bx}{ U_2\bx}
= 2\ip{ \bx}{ (B^*B_H + B_H B)\bx} 
\leq 4\| \sqrt{B_H}\bx\|  \| \sqrt{B_H} B\bx\| 
= 4 \sqrt{\lambda_\bx} \sqrt{\mu_\bx}
\end{equation*}
using~\eqref{lambda_x:mu_x:mHC_1}. In the same way, we derive for $U_3 = -2(B_H B^2 +2B^* B_H B +(B^*)^2 B_H)$ that
\begin{multline*}
 \ip{ \bx}{ U_3\bx}
\leq -4\ip{ \bx}{ B^*B_H B \bx} -2\ip{ \bx}{ (B_H B^2 +(B^*)^2 B_H)\bx}
 \\
\le  -4\mu_\bx +4\norm{\sqrt{B_H} \bx}\norm{\sqrt{B_H} B^2 \bx}
= -4\mu_\bx +4\sqrt{\lambda_\bx}\norm{\sqrt{B_H} B^2 \bx} .
\end{multline*}
Altogether then,
\begin{equation} \label{est:Q3}
\begin{split}
 \langle \bx, Q_3(t)\bx\rangle
&\leq 1 -2 \lambda_\bx\ t +2\sqrt{\lambda_\bx} \sqrt{\mu_\bx}\ t^2 -\frac23 \mu_\bx\ t^3 +\frac23 \sqrt{\lambda_\bx}\norm{\sqrt{B_H} B^2}\ t^3 
 \\
&= 1 -2t \Big( \sqrt{\lambda_\bx} -\frac{\sqrt{\mu_\bx}}2 t\Big)^2 -\frac16 \mu_\bx\ t^3 +\frac23 \sqrt{\lambda_\bx}\norm{\sqrt{B_H} B^2}\ t^3 .
\end{split}
\end{equation}
To estimate~\eqref{est:Q3},
we will distinguish two cases for $\sqrt{\lambda_\bx}\in[0,\|\sqrt{B_H}\|]$:
If $\sqrt{\lambda_\bx}$ is sufficiently small we will use the third term in~\eqref{est:Q3} to compensate the non-negative fourth term in \eqref{est:Q3}, while in the other case we will use the second term to do so.

\underline{Case a:}
If 
\begin{equation}\label{alpha}
\sqrt{\lambda_\bx}\leq \alpha
 \quad\text{with }
 \alpha :=\min\Bigg(1,\ \frac{\kappa}{2\big(1+4\norm{\sqrt{B_H}B^2}\big)}\Bigg) >0 ,
\end{equation}
then the following estimates hold:
Due to~\eqref{lambda_x:mu_x:mHC_1} and $\sqrt{\lambda_\bx}\leq 1$ we deduce that $\mu_\bx \geq \kappa -\lambda_\bx \geq \kappa -\sqrt{\lambda_\bx}$ which implies that
\begin{equation} \label{est:Q3:Case_a}
 \langle \bx, Q_3(t)\bx\rangle
\leq 1 -\frac16 \mu_\bx\ t^3 +\frac23 \sqrt{\lambda_\bx}\norm{\sqrt{B_H} B^2}\ t^3 
\leq 1 -\frac{\kappa}{12} \ t^3 ,
 \qquad \text{for } t\geq 0\ .
\end{equation}

\underline{Case b:}
If $\alpha\leq \sqrt{\lambda_\bx}\leq \norm{\sqrt{B_H}}$ then we restrict the time interval to obtain a similar estimate.
Using $\alpha$ from~\eqref{alpha}, define 
\begin{equation} \label{t_*}
 t_1 := \frac{\alpha}{\sqrt{\norm{B^* B_H B}}} .
\end{equation}
For $t\in[0,t_1]$, we use $0\leq\mu_\bx\leq\norm{B^* B_H B}$ to derive 
\[
 \sqrt{\lambda_\bx} -\frac{\sqrt{\mu_\bx}}2 t
\geq \alpha -\frac{\sqrt{\mu_\bx}}2 t_1
\ge \frac{\alpha}2
> 0 .
\]
Consequently, the expression in~\eqref{est:Q3} can be estimated as
\begin{equation*} 
\begin{split}
\langle \bx, Q_3(t)\bx\rangle
&\leq 1 -\frac{\alpha^2}2 t +\frac23 \norm{\sqrt{B_H}}\ \norm{\sqrt{B_H} B^2}\ t^3 
 \\
&\leq 1 -t^3 \Big( \frac{\alpha^2}2 \frac1{t_1^2} -\frac23 \norm{\sqrt{B_H}}\ \norm{\sqrt{B_H} B^2}\Big) .
\end{split}
\end{equation*}
Then, there exists $t_*\in(0,t_1]$ such that 
\begin{equation} \label{est:Q3:Case_b}
\langle \bx, Q_3(t)\bx\rangle
\leq 1 -\frac{\kappa}{12} \ t^3 ,
 \qquad \text{for } t\in[0,t_*]\ .
\end{equation}
Together the estimates~\eqref{est:Q3:Case_a} and~\eqref{est:Q3:Case_b} as well as  \eqref{Q:norm}, \eqref{Q3:norm}
}
prove the upper bound~\eqref{short-t-decay:upper2} with $a=3$ for $\mHC =1$, e.g. with \makeblue{$c_2 :=\kappa/24$}. 
For $\mHC \geq 2$, see \makeblue{Lemma~\ref{lm:Q:norm} in Appendix~\ref{app:short-time-decay}}. 

\medskip
To sum up, due to~\eqref{short-t-decay:lower} and Lemma~\ref{lm:Q:norm}, there exist \makered{constants} $c_1, c_2>0$ such that $a=2\mHC+1$ and
\begin{equation} \label{short-t-decay:sandwich}
 1-c_1 t^a +\bigO(t^{a+1})
\leq \| P(t)\|_2
\leq 1-c_2 t^a +\bigO(t^{a+1})
\quad\text{as } t\to 0^+ 
\ ,
\end{equation}
e.g. choosing $c_1$ as in~\eqref{c:lower_bound} and $c_2$ as in Lemma~\ref{lm:Q:norm}, respectively.
Moreover, due to \eqref{prop-norm-analytic}, $\|P(t)\|_2$ is analytic on $[0,t_0]$.
This implies that the propagator norm satisfies~\eqref{short-t-decay} for some $c\in\R$, which satisfies $0 <c_2 \leq c \leq c_1$.

\medskip
\makeblue{{\bf For the reverse direction}} suppose that~\eqref{short-t-decay} is satisfied for some constant~$c>0$ and an exponent $a>0$. 
Then, \makeblue{as we shall show,} for some least finite value of $m\in\N_0$, $T_m =\sum_{j=0}^{m} \BA^j \BH(\BA^*)^j >0$ has to hold, or equivalently,
\[
 \ttT_m 
=\sum_{j=0}^{m} (B^*)^j \BH B^j >0 \,.
\]
Otherwise, for arbitrarily large values of $\tm\in \N$, we could find unit vectors $\bx_0$ such that $\ttT_{\tm-1}\bx_0 = 0$, and then by the first part of this proof, we would have for such a vector
$\|P(t)\bx_0\|_2^2 \geq 1 -c_{\tm}t^{2\tm+1}+ \mathcal{O}(t^{2\tm+2})$ with some $c_{\tm}\ge0$, because of \eqref{eq:3}. 
But for sufficiently large $\tm$, this is incompatible with~\eqref{short-t-decay}.
Thus we conclude that, whenever \eqref{short-t-decay} is valid for {\em any} $a>0$, $B$ has a finite hypocoercivity index~$\mHC\in \N_0$, and then necessarily, $a\in2\N_0+1$.

\medskip
Part~\ref{th:HC-decay:b}: 
\makered{So far we have} proved that the propagator norm satisfies~\eqref{short-t-decay:sandwich}, e.g. choosing $c_1$ as in~\eqref{c:lower_bound} and \modif{$c_2$ as the lower bound of~$c$ in~\eqref{c:lower_bound:mHC}}, respectively.
Using the improved upper bound~$c_1$ in~\eqref{c:upper_bound:mHC} and lower bound~$c_2$ in~\eqref{c:lower_bound:mHC} for the multiplicative constant $c$ we realize that $c_2 =c =c_1$ such that~\eqref{c:mHC} holds.
In the final identity of~\eqref{c:mHC} we used again~\eqref{Bmx:BAmx} to reveal that $c$ is proportional to $\BA^{2m_{HC}}$.
This finishes the proof.
\end{proof}

%
\makered{
For $\epsilon$-dependent ODE systems of the form~\eqref{ODE:epsA+C}, Theorem \ref{th:HC-decay}\ref{th:HC-decay:b} implies the following result: 
\begin{cl} \label{cl:HC-decay}
Consider the $\epsilon$-dependent ODE \eqref{ODE:epsA+C} with system matrix $B=\epsilon A +C$ where $\epsilon\in\R$.
If $B=\epsilon A+C$ is hypocoercive for $\epsilon\ne 0$, then the coefficient $c=c_\epsilon$ in the Taylor expansion of the propagator norm~\eqref{short-t-decay} satisfies 
\begin{equation} \label{cl:HC-decay:epsilon}
 c
=c_\epsilon 
=\epsilon^{2m_{HC}}\ \frac1{({2{\mHC}+1})! \binom{2{\mHC}}{\mHC}} \min_{\bx_0\in  \ker\big(\tttT_{\mHC-1}\big), \ \|\bx_0\|_2=1} \langle \bx_0,(A^*)^{\mHC}C A^\mHC \bx_0\rangle \ .
\end{equation}
\end{cl}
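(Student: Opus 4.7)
The plan is to obtain Corollary \ref{cl:HC-decay} as a direct specialization of Theorem \ref{th:HC-decay}\ref{th:HC-decay:b} applied to $B = \epsilon A + C$. First, I would record the identifications $B_H = C$ and $B_A = \epsilon A$; since $\epsilon\in\R$ and $A^\ast = -A$, this gives $B_A^\ast = \epsilon A^\ast = -\epsilon A$. Second, by Remark \ref{remark:HC+eps}, both the hypocoercivity property of $B=\epsilon A+C$ and the value of its HC-index are independent of $\epsilon\neq 0$, so Theorem \ref{th:HC-decay}\ref{th:HC-decay:b} applies uniformly and already delivers a Taylor expansion of the propagator norm with coefficient $c=c_\epsilon$ given by the second expression in \eqref{c:mHC}.

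The third step is to rewrite the quadratic form $\langle \bx,(B_A^\ast)^{\mHC} B_H B_A^{\mHC}\bx\rangle$ in the form appearing in \eqref{cl:HC-decay:epsilon} by pulling the scalar $\epsilon$ out of each factor. Explicitly, $B_A^{\mHC}=\epsilon^{\mHC} A^{\mHC}$ and $(B_A^\ast)^{\mHC}=(-\epsilon)^{\mHC} A^{\mHC}=\epsilon^{\mHC}(A^\ast)^{\mHC}$, where the $(-1)^{\mHC}$ is absorbed using $(A^\ast)^{\mHC}=(-1)^{\mHC}A^{\mHC}$. Hence
\begin{equation*}
(B_A^\ast)^{\mHC}\, B_H\, B_A^{\mHC} \;=\; \epsilon^{2\mHC}\,(A^\ast)^{\mHC}\, C\, A^{\mHC},
\end{equation*}
and substituting this into \eqref{c:mHC} produces exactly the right-hand side of \eqref{cl:HC-decay:epsilon}, with the prefactor $\epsilon^{2\mHC}$ cleanly factored out.

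The only point that deserves a brief justification, rather than being a genuine obstacle, is that the constraint set $\kernel(\tttT_{\mHC-1})$ over which the minimum is taken is itself independent of $\epsilon\neq 0$, so that the $\epsilon$-scaling is indeed isolated in the prefactor. This follows from Lemma \ref{lem:Kalman}: $\kernel(\tttT_{\mHC-1})$ is the orthogonal complement of $\range\{\sqrt{C},\,B^\ast\sqrt{C},\,\dots,\,(B^\ast)^{\mHC-1}\sqrt{C}\}$, which by the range equivalence \eqref{KRC:range:a'}--\eqref{KRC:range:b} coincides with $\range\{\sqrt{C},\,B_A^\ast\sqrt{C},\,\dots,\,(B_A^\ast)^{\mHC-1}\sqrt{C}\} = \range\{\sqrt{C},\,\epsilon A^\ast\sqrt{C},\,\dots,\,\epsilon^{\mHC-1}(A^\ast)^{\mHC-1}\sqrt{C}\}$, and the latter span is manifestly independent of $\epsilon\neq 0$. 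Everything else is bookkeeping, so I expect no real technical hurdle.
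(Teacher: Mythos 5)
Your proposal is correct and follows essentially the route the paper intends: apply the second expression in \eqref{c:mHC} with $B_H=C$, $B_A=\epsilon A$, factor out $\epsilon^{2\mHC}$ from $(B_A^*)^{\mHC}B_H B_A^{\mHC}$, and check via Lemma~\ref{lem:Kalman} (equivalently Remark~\ref{remark:HC+eps}) that the constraint set $\ker\big(\tttT_{\mHC-1}\big)$ is $\epsilon$-independent. The paper presents Corollary~\ref{cl:HC-decay} as an immediate consequence of Theorem~\ref{th:HC-decay}\ref{th:HC-decay:b} without writing out these two substitution steps, and your write-up supplies exactly the missing bookkeeping.
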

}
\begin{remark}
As already briefly mentioned in Example 2 of \cite{AAM21}, the hypocoercivity index $m_{HC}$ is upper semicontinuous w.r.t.\ the matrix $\mB$: 
An arbitrarily small perturbation of $\mB$ can lower but not increase the index. This is consistent with the result in \eqref{short-t-decay}: 
All Taylor coefficients are there of course continuous in $\mB$. 
But a small perturbation of $\mB$ may lower, but not increase the number~$a$ of the first non-vanishing monomial in this Taylor series (beyond the constant 1).
\end{remark}

\makered{
\begin{remark}
    Note that the leading exponent in \eqref{short-t-decay} can only be odd. This is related to the local behavior of trajectories that decay the worst (in the vicinity of a stationary point $t_0$ of $\|\bx(t)\|$), see the yellow curve in Fig.\  \ref{fig:envelope}, left. Since the system \eqref{ODE:B} is assumed to be conservative-dissipative, such a trajectory, of course, cannot behave locally like $1 - c(t-t_0)^a$ with $a$ even. 
\end{remark}
}

\begin{remark}
Special cases of the above theorem were pointed out to us by Laurent Miclo: 
In \S1 of \cite{MiMo13} the short time decay behavior of the Goldstein-Taylor model (a linear transport equation with relaxation term) was determined as $1-\frac{t^3}3+o(t^3)$. 
Actually, this model is a PDE. 
But since it is considered on a torus in $x$, each of its spatial Fourier modes (except of the 0-mode) satisfies a conservative-dissipative ODE system with hypocoercivity index 1 (see \cite{AAC16} for details of this modal decomposition). 
Hence, mode by mode, the result from \cite{MiMo13} is an example for Theorem~\ref{th:HC-decay}. 
For closely related BGK-models with hypocoercivity index 2 and 3 we refer to \cite{AAC18}.

In \cite{GaMi13} the short time decay behavior of a kinetic Fokker--Planck equation on the torus in $x$ was computed as $1-\frac{t^3}{12}+o(t^3)$. 
Again, in Fourier space and by using a Hermite function basis in velocity, this model can be written as an (infinite dimensional) conservative-dissipative system with hypocoercivity index 1 (see \S2.1 of \cite{GaMi13}). 
In that paper it was also mentioned that the decay exponent in~\eqref{short-t-decay} can be seen as some ``order of hypocoercivity'' of the generator.

For degenerate Fokker--Planck equations, the hypocoercivity index can also be related to the regularization rate for short times: 
In \cite[Theorem A.12]{Vi09} the regularization of initial data from a weighted $L^2$ space into a weighted $H^1$ space is derived, and in \cite[Theorem A.15]{Vi09}, \cite[Theorem 4.8]{ArEr14} it is generalized to entropy functionals and their corresponding Fisher informations. 
In all these cases the regularization rate is $t^{-a}$ with $a=2\mHC+1$ (somewhat related to Theorem~\ref{th:HC-decay} above).
\end{remark}

%
%

\medskip
\modif{By definition, the propagator norm of an ODE~\eqref{ODE:B} is given as the envelope of the norm of a family of solutions, see e.g. \cite{AAS19} and Figure~\ref{fig:envelope}. But, maybe surprisingly, even its precise short time behavior is \emph{not} given by the norm of any specific solution. This is illustrated in the following example.}

\begin{ex}\label{ex:envelope}
We consider ODE~\eqref{ODE:B} with matrix 
\begin{equation}\label{B:envelope}
 B =\begin{pmatrix} 1 & -3/10 \\ 3/10 & 0 \end{pmatrix} .
\end{equation}
The eigenvalues of $B$ are $\lambda_1 =1/10$ and $\lambda_2 =9/10$, and the eigenvalues of $\BH$ are $0$ and $1$.
Thus, matrix $B$ is hypocoercive with hypocoercivity index $\mHC=1$. 
Following (the first part of) the proof of Theorem~\ref{th:HC-decay}, solutions starting in $\bx_0$ satisfying~\eqref{x0} are used to establish the desired lower bound of the propagator norm.
The kernel of $\BH$ is one-dimensional and it is spanned by $\bx_0 =\binom{0}{1}$.
The solution of~\eqref{ODE:B} with initial condition $\bx(0)=\bx_0$ is given by
\[
 \bx(t)
 =\tfrac18 \begin{pmatrix} 3 & -3 \\ 9 & -1 \end{pmatrix} 
  \binom{e^{-t/10}}{e^{-9t/10}}
\]
and its squared norm satisfies
\[
 \|\bx(t)\|^2_2
 =\tfrac{45}{32} e^{-t/5} -\tfrac9{16} e^{-t} +\tfrac5{32} e^{-9t/5}
 \sim 1 -0.06\ t^3 +\bigO(t^4) \quad\text{for } t\to 0^+.
\]
By contrast, due to \cite[Proposition 4.2]{AAS19}, the squared propagator norm satisfies
\begin{equation} \label{ex:short-time-decay}
 \begin{split}
 \|e^{-Bt}\|^2_2 
 \leq &e^{-t} \tfrac1{16} \Big(\sqrt{(25 \cosh(8t/10)-9)^2 -16^2} +25\cosh(8t/10) -9\Big) \\
 &\sim 1 -0.015\ t^3 +\bigO(t^4) \quad\text{for } t\to 0^+.
 \end{split}
\end{equation}
Thus the propagator norm decays slower than the solution starting at the vector $\bx_0$ which satisfies~\eqref{x0} with $\mHC=1$, see also Figure~\ref{fig:envelope}.

\makered{In fact, the sharp constant $c=0.015$ in \eqref{ex:short-time-decay} cannot be obtained by any single trajectory, but rather by a family of trajectories starting at the one-parameter family of normalized initial conditions, $\bx_\tau,\,\tau\ge0$ emerging from $\bx_0$: More concretely, using 
$\bx_\tau:=\big(-\frac{3\tau}{20}, 1\big)^\top \big/\sqrt{1+\frac{9\tau^2}{400}}$ (as constructed in Lemma~\ref{lm:g} below) yields
$$
  \|e^{-Bt}\,\bx_t\|^2_2 \sim 1 -0.015\ t^3 +\bigO(t^4) \quad\text{for } t\to 0^+.
$$
Note that we used here the initial conditions $\bx_t$ (i.e.\ $\tau=t$), and that we have $\ddtau\bx_\tau(\tau=0)=\big(-\frac{3}{20},0\big)^\top=\frac12 B\bx_0$.\\ 
}
\begin{figure}
\begin{center}
\includegraphics[scale=0.42]{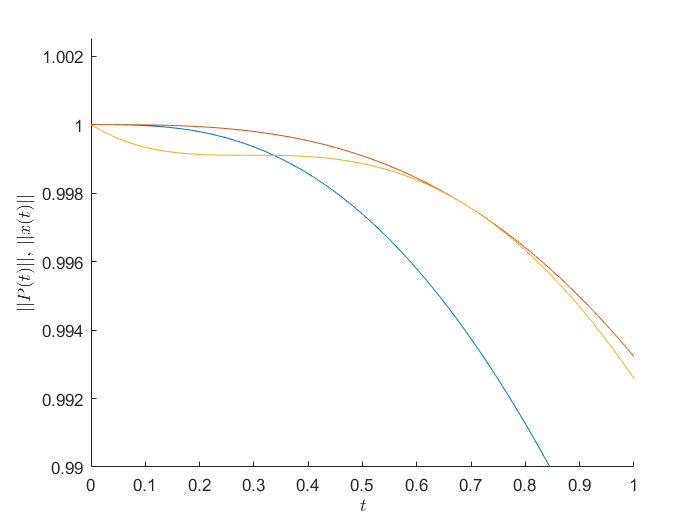}
\includegraphics[scale=0.42]{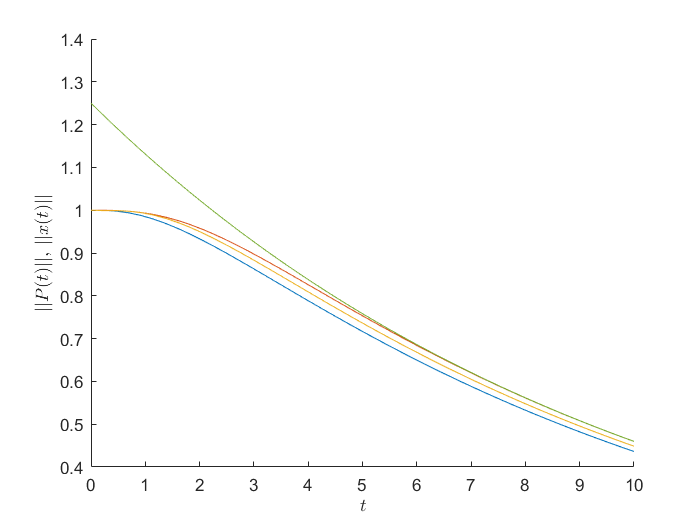}
\end{center}
\caption{Evolution of~\eqref{ODE:B} with matrix $B$ from~\eqref{B:envelope}: Comparison between the propagator norm (red line), its upper exponential envelope $1.25\exp(-t/10)$ (green line), and the norm of the solution with initial condition $\bx(0)=\binom01$ (blue line) and that with initial condition $\bx(0)=\binom{-0.1}{\sqrt{0.99}}$ (yellow line), all plotted on two time scales.}
 \label{fig:envelope}
\end{figure}
\end{ex}
%
%

%
\subsection{Numerical illustration of the short time decay and the waiting time~$t_0$}\label{subsect:num}

Next we shall illustrate the decay behavior on two examples of dimension $n=4$. 
In particular we shall consider the $\epsilon$-dependence of the three phases, the asymptotic phase close to $t=0$ (as characterized in Theorem \ref{th:HC-decay}), the intermediate phase (characterized by the waiting time $t_0$ and the exponential decay for large time (see~\eqref{HC-decay}).

\begin{ex} \label{ex:num1}
Consider the matrix family~$B_\epsilon:=\epsilon A +C$, $\epsilon\ne0$ with 
\begin{equation}
A =
\begin{pmatrix}
0 & 0 & 0 & 1 \\
0 & 0 & 1 & 0 \\
0 & -1 & 0 & 0 \\
-1 & 0 & 0 & 0 
\end{pmatrix} \,, \qquad 
C = \diag(0,0,1,1) \ ,
\end{equation}
which satisfies $m_{HC}(B_\eps)=1$.

Figure \ref{fig1} shows the spectral norm of the semigroup $P_\epsilon(t):=e^{-B_\eps t}$ as a function of time and for several values of $\epsilon$ (differing from each other by the factor $\sqrt2$). 
The numerically observed waiting times (to decrease the solution norm by the factor $1/e$) are very close to $t_0\sim 1/\epsilon^2$. 
They increase by a factor of about 2 when passing from $\epsilon$ to $\sqrt2 \epsilon$, at least in the asymptotic regime $\eps\to0$. 
A close-up of the same figure around $t=0$ shows that the asymptotic behavior of the semigroup norm is like $\|P_\epsilon(t)\|_2\sim 1-c_\epsilon\,t^3$ with $c_\epsilon= \tilde c\,\epsilon^2$, and by recalling that $m_{HC}(B_\eps)=1$ implies $a=3$ in \eqref{short-t-decay}. 
\makered{Following~\eqref{cl:HC-decay:epsilon}, the multiplicative factor is} $\tilde c=\frac{1}{12}$ (compare with e.g.~\cite{AAS19}).
\begin{figure}[htbp]
\begin{center}
\includegraphics[scale=0.4]{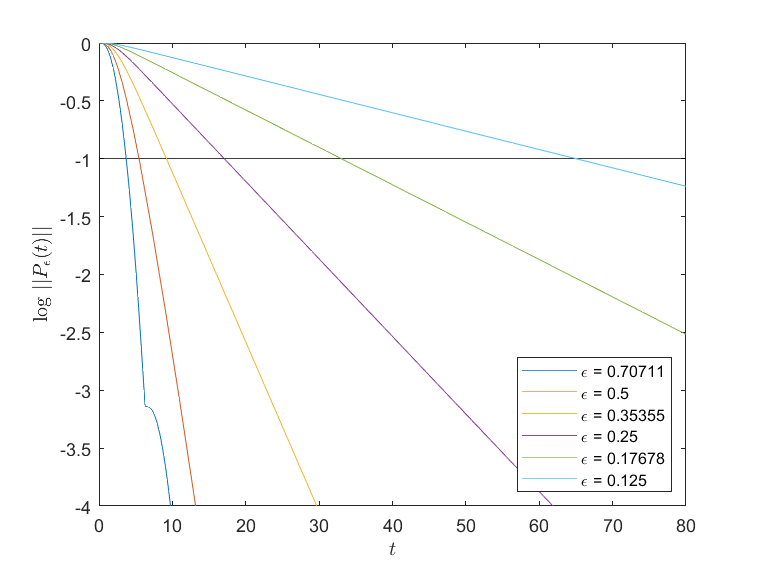}
\includegraphics[scale=0.4]{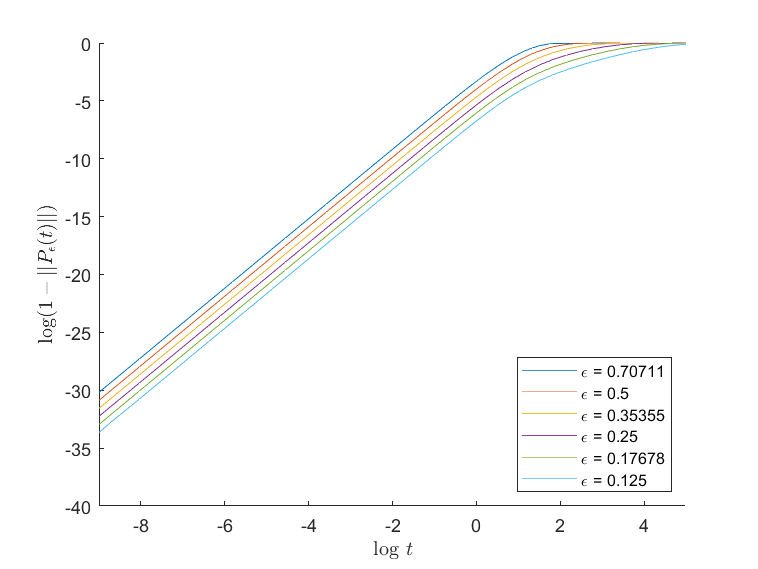}
\end{center}
\caption{The decay of $\|P_\eps(t)\|_2$ is given for six values of $\epsilon$. 
Left: 
For $t$ away from 0, this semigroup decays almost exponentially. 
With the logarithmic scale used here, the horizontal black line corresponds to $1/e$. 
The waiting times (defined as intersection with the line $1/e$) behave like $\bigO(\epsilon^{-2})$. 
We remark that the kink in the blue curve is \emph{not} a numerical artifact. \newline
Right: 
This double logarithmic plot shows $1-\|P_\eps(t)\|_2\sim c_\epsilon\,t^3$ for small time, more precisely for $t\in[e^{-9},e^5]$. 
The curves have slope 3, and $c_\epsilon= \tilde{c}\,\epsilon^2$. 
The plot also shows the quite sharp transition from the initial algebraic behavior $1-\tilde{c}\,\eps^2\,t^3$ to the exponential behavior $c_\epsilon^* \, e^{-\tilde{\mu}\,\epsilon^2\,t}$.
}
\label{fig1}
\end{figure}
\end{ex}

\begin{ex} \label{ex:num2}
This example is analogous to Example \ref{ex:num1}, but for the matrix family~$B_\epsilon:=\epsilon A +C$, $\epsilon\ne0$ with 
\begin{equation}
A =
\begin{pmatrix}
0 & 1 & 0 & 0 \\
-1 & 0 & 1 & 0 \\
0 & -1 & 0 & 1 \\
0 & 0 & -1 & 0 
\end{pmatrix} \,, \qquad 
C = \diag(0,0,0,1) \ ,
\end{equation}
which satisfies $m_{HC}(B_\eps)=3$.

Figure \ref{fig2} shows the spectral norm of the semigroup $P_\epsilon(t):=e^{-B_\eps t}$. 
The numerically observed waiting times (to decrease the solution norm by the factor $1/e$) are very close to $t_0\sim 4/\epsilon^2$.  
A close-up around $t=0$ shows that the asymptotic behavior of the semigroup norm is like $\|P_\epsilon(t)\|_2\sim 1-c_\epsilon\,t^7$ with $c_\epsilon= \tilde c\,\epsilon^6$, and by recalling that $m_{HC}(B_\eps)=3$ implies $a=7$ in \eqref{short-t-decay}. 
\makered{Following~\eqref{cl:HC-decay:epsilon}, the multiplicative factor is} $\tilde c=\frac{1}{100800}$. 

\begin{figure}[htbp]
\begin{center}
\includegraphics[scale=0.4]{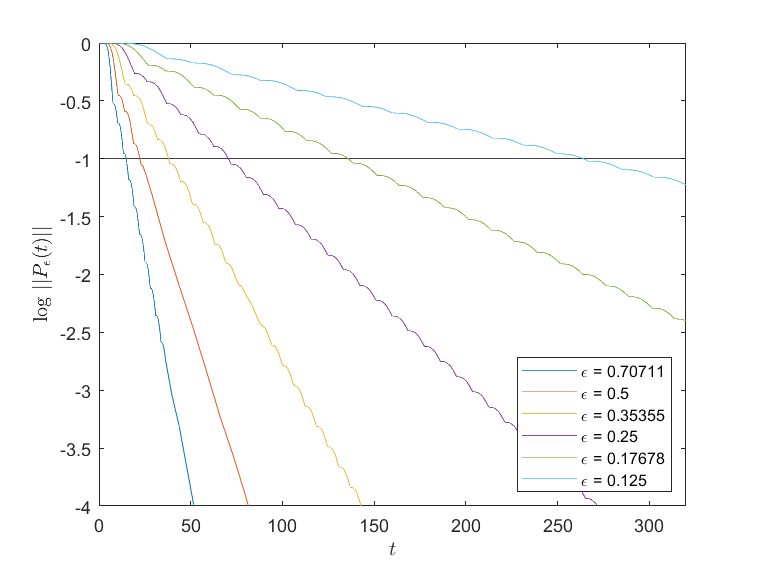}
\includegraphics[scale=0.4]{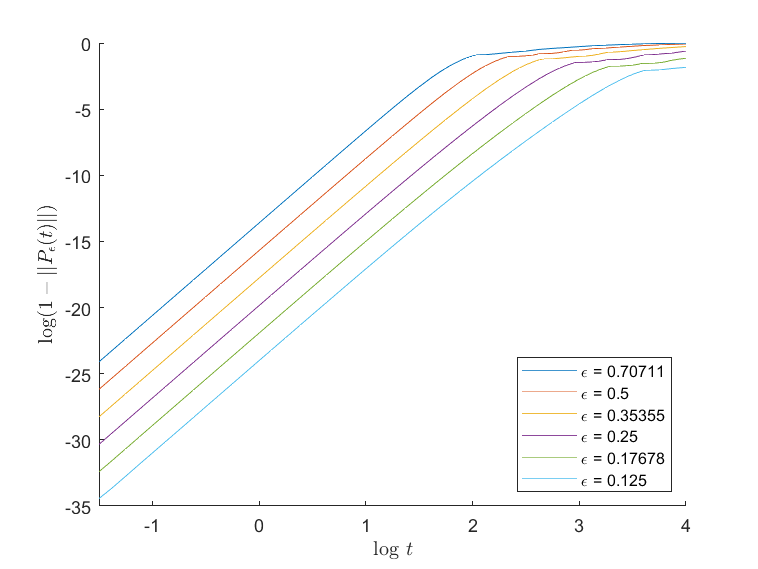}
\end{center}
\caption{The decay of $\|P_\eps(t)\|_2$ is given for six values of $\epsilon$. 
Left: 
For $t$ away from 0, this semigroup decays almost exponentially. 
With the logarithmic scale used here, the horizontal black line corresponds to $1/e$. 
The waiting times (defined as intersection with the line $1/e$) behave like $\bigO(\epsilon^{-2})$. \newline
Right: 
This double logarithmic plot shows $1-\|P_\eps(t)\|_2\sim c_\epsilon\,t^7$ for small time, more precisely for $t\in[e^{-1.5},e^4]$. 
The curves have slope 7, and $c_\epsilon= \tilde{c}\ \epsilon^6$. 
The plot also shows the quite sharp transition from the initial algebraic behavior $1-\tilde{c}\ \eps^6\,t^7$ to the exponential behavior $c_{\epsilon}^*\ e^{-\tilde{\mu} \ \epsilon^2\ t}$.
}
\label{fig2}
\end{figure}
\end{ex}

\appendix
\renewcommand{\thesection}{\Alph{section}}

\section{Auxiliary results to prove short time decay of propagator norm}
\label{app:short-time-decay}
To finish the proof of Theorem~\ref{th:HC-decay}, we shall prove the following upper bound for the propagator norm:
\begin{lm}\label{lm:Q:norm}
Let the ODE system~\eqref{ODE:B} be conservative-dissipative, and let the system matrix $B$ be (hypo)coercive with hypocoercivity index $\mHC\in\makered{\N}$.
Then, there \makeviolet{exist constants $c_2,M,t_2>0$} 
such that 
\begin{equation} \label{est:Propagator}
 \| e^{-B t} \|_2
\leq 1 -c_2 t^a +\makeviolet{M t^{a+1}} 
\qquad \text{for } t\makeviolet{\in[0,t_2]}
\ ,
\end{equation}
where $a =2 \mHC +1$.
\makered{
Moreover, the multiplicative factor~$c$ in~\eqref{short-t-decay} satisfies
\begin{equation}\label{c:lower_bound:mHC}
 c \geq c_2
\qquad\text{with }
 c_2
:= \frac1{({2{\mHC}+1})! \binom{2{\mHC}}{\mHC}} \min_{\bx\in\ker\big(\tttT_{\mHC-1}\big), \ \|\bx\|=1} \ip{\bx}{(B^*)^\mHC \BH B^\mHC \bx} .
\end{equation}
}
\end{lm}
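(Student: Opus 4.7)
The plan is to reformulate the problem as a lower bound on the smallest eigenvalue of a positive semidefinite operator. Integrating the identity $\ddt\|P(t)x\|_2^2 = -2\|\sqrt{B_H}P(t)x\|_2^2$ gives $P^*(t)P(t) = I - 2R(t)$, where $R(t):=\int_0^t P^*(s)B_H P(s)\,ds\geq 0$, so that $\|P(t)\|_2^2 = 1 - 2\lambda_{\min}(R(t))$. It therefore suffices to establish
\[
  \lambda_{\min}\bigl(R(t)\bigr) \;\geq\; c_2\, t^{2m_{HC}+1} \;-\; O\!\bigl(t^{2m_{HC}+2}\bigr),
\]
with $c_2$ as claimed; the bound on $\|P(t)\|_2$ then follows from $\sqrt{1-x}\le 1-x/2+O(x^2)$ for small $x\geq 0$.

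I would then Taylor expand $R(t) = \sum_{j,k\geq 0}\frac{(-1)^{j+k}t^{j+k+1}}{j!\,k!\,(j+k+1)}(B^*)^j B_H B^k$ and truncate at $j+k\leq 2m_{HC}$, with residual $O(t^{2m_{HC}+2})$ in operator norm. The key algebraic observation is that on the hypocoercivity kernel $\ker(\tttT_{m_{HC}-1})$, every term with $\min(j,k)<m_{HC}$ vanishes, leaving only $j=k=m_{HC}$, so that for $x\in\ker(\tttT_{m_{HC}-1})$ with $\|x\|_2=1$,
\[
  \ip{x}{R(t) x} \;=\; \frac{\binom{2m_{HC}}{m_{HC}}}{(2m_{HC}+1)!}\,\bigl\|\sqrt{B_H}B^{m_{HC}}x\bigr\|_2^2\, t^{2m_{HC}+1} \;+\; O\!\bigl(t^{2m_{HC}+2}\bigr).
\]
Taken by itself this yields a constant $\binom{2m_{HC}}{m_{HC}}^2$ times \emph{larger} than the claimed $c_2$, matching the naive upper bound on $c$ from part~(a) of the theorem (i.e.\ the rate attained by any single trajectory inside the kernel).

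To upgrade this partial estimate into a lower bound on $\lambda_{\min}(R(t))$ over all of $\C^n$, I would use the orthogonal splitting $\C^n = \ker(\tttT_{m_{HC}-1})\oplus\ker(\tttT_{m_{HC}-1})^\perp$ and write $R(t)$ as a $2\times 2$ block matrix. Positive definiteness of $\tttT_{m_{HC}-1}$ on the complement gives $R_{\mathrm{out},\mathrm{out}}(t) \geq c\,t\,I$ for some $c>0$, while $R_{\mathrm{in},\mathrm{in}}(t)$ is of order $t^{2m_{HC}+1}$, and a direct counting of lowest nonvanishing terms shows $R_{\mathrm{in},\mathrm{out}}(t) = O(t^{m_{HC}+1})$. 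A Schur-complement argument then reduces $\lambda_{\min}(R(t))$ to the effective inner block $R_{\mathrm{in},\mathrm{in}}(t) - R_{\mathrm{in},\mathrm{out}}(t)R_{\mathrm{out},\mathrm{out}}(t)^{-1}R_{\mathrm{in},\mathrm{out}}(t)^*$, where the Schur correction is of size $t^{m_{HC}+1}\cdot t^{-1}\cdot t^{m_{HC}+1}=t^{2m_{HC}+1}$---i.e.\ exactly the order of $R_{\mathrm{in},\mathrm{in}}(t)$. This negative correction accounts precisely for the reduction of the naive bound by the factor $\binom{2m_{HC}}{m_{HC}}^2$ down to the sharp $c_2$, and it encodes the ``envelope'' phenomenon emphasized in Example~\ref{ex:envelope}: the sharp decay rate is realized only by a $t$-dependent family of initial conditions that mixes $\ker(\tttT_{m_{HC}-1})$ with its complement, never by any single trajectory.

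The main obstacle will be computing the leading-order constants of all three blocks sharply enough to verify that the Schur correction produces exactly the binomial factor and nothing more. A more direct alternative, generalizing the main-text argument for $m_{HC}=1$, is to work instead with the truncated matrix $Q_{2m_{HC}+1}(t)=\sum_{j=0}^{2m_{HC}+1}\frac{t^j}{j!}U_j$, bound each $\ip{x}{U_j x}$ by Cauchy--Schwarz in terms of $\|\sqrt{B_H}B^k x\|_2$ for $0\le k\le m_{HC}$, complete appropriate squares, and carry out a case split on the size of $\ip{x}{\tttT_{m_{HC}-1} x}$. The case split for general $m_{HC}$ is substantially more elaborate than the two-case argument in the main text---one essentially needs $m_{HC}+1$ nested cases tracking the profile of $\|\sqrt{B_H}B^j x\|_2$ for $j=0,\dots,m_{HC}-1$, each handled on its own small time subinterval---and this combinatorial bookkeeping, together with the identification of the binomial factor, is what makes the present lemma appendix-worthy.
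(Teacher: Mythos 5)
Your reformulation $P^*(t)P(t)=I-2R(t)$ with $R(t)=\int_0^t P^*(s)\BH P(s)\,\dd{s}\geq 0$, and the computation of the leading in-block coefficient on $\ker\big(\ttT_{\mHC-1}\big)$, are both correct and give a clean alternative starting point not used in the paper. The gap is the pivotal claim $R_{\text{out},\text{out}}(t)\geq c\,t\,I$. This holds only for $\mHC=1$: for $\mHC\geq 2$ one has $\ker\big(\ttT_{\mHC-1}\big)\subsetneq\ker(\BH)$, so $\ker\big(\ttT_{\mHC-1}\big)^\perp$ still contains nonzero $\by$ with $\BH\by=0$, on which $\ip{\by}{R(t)\by}$ is of order $t^{2k+1}$ for some $k\in\{1,\dots,\mHC-1\}$, not of order $t$. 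Along those directions $R_{\text{out},\text{out}}(t)^{-1}$ is as large as $t^{-(2k+1)}$, and the blanket bound $R_{\text{in},\text{out}}(t)=\bigO(t^{\mHC+1})$ then yields a Schur correction of size $\bigO(t^{2\mHC+1-2k})$, which overwhelms the $t^{2\mHC+1}$ in-block for every $k\geq 1$. Rescuing the argument requires the refined estimate $\norm{R_{\text{out},\text{in}}(t)\bx}=\bigO(t^{\mHC+k+1})$ along exactly those directions, i.e.\ you must stratify $\ker\big(\ttT_{\mHC-1}\big)^\perp$ by the first $k$ with $\sqrt{\BH}B^k\by\neq 0$ --- and at that point your Schur route merges with your own ``alternative'' case split, which is essentially the paper's partition of the sphere into the nested sets $\setC_0,\dots,\setC_{\mHC}$ of~\eqref{setsC}. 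Moreover, the identification that the Schur correction cuts the naive in-block constant by exactly the factor $\binom{2\mHC}{\mHC}^{-2}$ is asserted but not computed; in the paper this is precisely what Lemma~\ref{lm:sum-of-squares} delivers (via the identity $\Delta^{(\ell)}_{2\ell+1,\ell}=\binom{2\ell}{\ell}^{-2}$ used in~\eqref{id+est:g:Cl}), packaging the very cross-terms you describe into manifestly non-positive completed squares together with the correct leading coefficient. You have located the right phenomenon and a promising reformulation, but the binomial constant requires an exact algebraic computation that your sketch does not perform.
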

\begin{proof}[Proof of Lemma~\ref{lm:Q:norm}]
First we note that the hypocoercivity of~$B$ implies $\BH\ne0$ due to~\eqref{HC-charact}. 
Following the proof of Theorem~\ref{th:HC-decay}, we consider
$\| e^{-B t} \|_2^2 =\lambda_{\max} \big(Q(t)\big)$ for small $t>0$, where 
\begin{equation}\label{A:Q}
Q(t)
=e^{-B^* t}e^{-Bt}
=\sum_{j=0}^{\infty} \frac{t^j}{j!}\ U_j 
\end{equation}
with
\begin{equation}\label{est:Uj} 
U_j
=(-1)^j \sum_{k=0}^j \binom{j}{k} (B^*)^k B^{j-k}\,,
\quad\text{satisfying }
\|U_j\|_2 \leq (2 \|B\|_2)^j \,, \quad j\in\N_0 \,.
\end{equation}
To compute $\lambda_{\max} \big(Q(t)\big) = \max_{\|\bx\|_2=1} \bx^* Q(t) \bx$, we consider the $t$-dependent function $g(\bx;t) :=\bx^* Q(t) \bx -1$ with $\bx$ in the sphere $\sphere :=\{ \bx\in\C^n\ |\ \|\bx\|_2 =1\}$.
For $a=2\mHC+1$, we denote the Taylor series for $Q(t)$ and $g(\bx;t)$ truncated after the $t^a/a!$ term with $Q_a(t)$ and $g_a(\bx;t)$, respectively.
We recall that $U_0 =I$ and $U_1 =-2\BH$.

\medskip
\makeblue{
Let the matrices $U_j$, $j\in\N_0$ denote the coefficients of $t^j/j!$ in the Taylor expansion~\eqref{Q-expans} and \eqref{A:Q}, such that
\begin{equation} \label{Uj}
 U_j := (-1)^j \sum_{k=0}^j \binom{j}{k} (B^*)^k B^{j-k} \,, \qquad j\in\N_0\, ,
\end{equation}
and note that each $U_j$ is self-adjoint. 
By Pascal's identity $\binom{j}{k} = \binom{j-1}{k-1} + \binom{j-1}{k}$ with the usual convention that $\binom{j}{-1} = \binom{j}{j+1} =0$,
\begin{align*}
 \sum_{k=0}^j \binom{j}{k} (B^*)^k B^{j-k}
&= \sum_{k=0}^j \left(  \binom{j-1}{k-1} (B^*)^k B^{j-k} + \binom{j-1}{k} (B^*)^k B^{j-k}\right)\\
&= \sum_{k=0}^j \left(  \binom{j-1}{k-1} (B^*)^{k-1}B^* B^{j-k} + \binom{j-1}{k} (B^*)^k B B^{j-1-k}\right)\\
&= \sum_{k=0}^{j-1}  \binom{j-1}{k} (B^*)^k (B^* + B)B^{j-1-k}
 = 2 \sum_{k=0}^{j-1} \binom{j-1}{k} (B^*)^k B_H B^{j-1-k} \,.
\end{align*}
Consequently,
\begin{equation} \label{Uj:2}
 U_j = (-1)^j\ 2 \sum_{k=0}^{j-1} \binom{j-1}{k} (B^*)^k B_H B^{j-1-k} \,, \qquad j\in\N\,.
\end{equation}
}

\medskip
First we outline the strategy of the proof, say for the case $\mHC=1$, i.e. $a=3$:
If $\bx\in\ker(U_1) =\ker(\BH)$ with $\|\bx\|_2=1$, then \eqref{x0} with $\bx_0=\bx$ holds.
\makeblue{Since $\mHC =1$, $B^*B_HB$ is positive definite on $\ker(B_H)$.
Consequently}, $\bx^* U_2 \bx=0$, $\bx^* U_3 \bx<0$, such that $g_a(\bx;t) =\tfrac{t^3}{3!} \bx^* U_3 \bx <0$ for $t>0$.
By contrast, if $\bx\notin\ker U_1$, we have $\bx^* U_1 \bx = -2\bx^* \BH\bx <0$.
Hence, for $\bx\notin\ker U_1$,
\[
 g_a(\bx;t)
 = -\hat{c} t +\bigO(t^2) \leq -\tilde{c} t^3
 \quad \text{for } t\to 0^+
\]
follows for some $\hat{c},\tilde{c}>0$ that depend on $\bx$.
Since $g_a(\bx;t)$ depends continuously on $\bx$, it is possible to combine these two estimates with a constant $c$ that is independent of $\bx\in\sphere$.
Since $(\ker U_1)^c \cap \sphere$ is not compact, we do not obtain a uniform estimate ``automatically''. 
So, the key aspect is here to obtain a uniform decay estimate for $\bx$ ``close to $\ker U_1$'', in the sense that $-\epsilon\leq \bx^* U_1 \bx\leq 0$.

\smallskip
\noindent
\underline{\textit{Step 1.}} 
\textit{Matrices with hypocoercivity index~$\mHC=1$.}
We suppose that matrix $B$ has hypocoercivity index $\mHC=1$, i.e. \makeblue{there exists $\kappa>0$ such that 
\begin{equation}\label{HC:kappa:mHC_1:app}
 \ttT_\mHC 
=\sum_{j=0}^{\mHC} (B^*)^j \BH B^j 
=\BH + B^* \BH B 
\geq \kappa I >0 \,. 
\end{equation}
}
Since $\mHC =1$, $B^*B_HB$ is positive definite on $\ker(B_H)$. 
Our goal is to estimate~$g(\bx;t)$ on $\sphere$.
\makeblue{
For $\bx\in\sphere$, we define
\begin{equation} \label{lambda_x:mu_x:mHC_1b}
 \lambda_\bx :=\ip{\bx}{\BH \bx}\geq 0\ , \quad
 \mu_\bx :=\ip{\bx}{B^* \BH B \bx}\geq 0\ , \quad
 \text{such that } \lambda_\bx +\mu_\bx \geq \kappa >0\ .
\end{equation}
}

\smallskip
\noindent
\underline{\textit{Step 1a.}} 
\makeviolet{Consider \fbox{$\bx\in\sphere$ with $\lambda_\bx \leq\makered{\delta}$} where $\delta\in(0,\kappa)$ will be chosen later and $\kappa>0$ such that~\eqref{HC:kappa:mHC_1:app} holds.
The key idea (to estimate $g(\bx;t)$ for $t\in[0,1]$)} is to collect the terms $t^j$ of order $j$ less than $a=2\mHC+1=3$ in a quadratic form which is non-positive.
Therefore, we use~\eqref{Uj:2} and Lemma~\ref{lm:sum-of-squares} with $U=(-B)^*$, $V=U_1$, $W=-B$ and \makeviolet{$m=\mHC-1=0$, to rewrite  $g(\bx;t)$ as}
\begin{equation*} 
\begin{split}
 g(\bx;t)
 = \bx^* \Big( \sum_{j=1}^{\infty} \frac{t^j}{j!}\ U_j \Big) \bx 
 &= \sum_{j=1}^{\infty} \frac{t^j}{j!} \sum_{k=0}^{j-1} \tbinom{j-1}{k} ((-B)^k \bx)^* U_1 (-B)^{j-k-1} \bx 
\\
 &= t \Bigg( \sum_{k=0}^{\infty} \frac{1}{(k+1)!} t^k ((-B)^k \bx)^* \Bigg) U_1
 \Bigg( \sum_{\ell=0}^{\infty} \frac{1}{(\ell+1)!} t^\ell (-B)^{\ell} \bx \Bigg) 
\\
 &\quad +\frac{t^3}{3!} \tbinom{2}{1} \tfrac14 ((-B) \bx)^* U_1 (-B) \bx 
 +\sum_{j=4}^{\infty} \frac{t^j}{j!} \sum_{k=1}^{j-2} \binom{j-1}{k} \Delta^{(1)}_{j,k} ((-B)^k \bx)^* U_1 (-B)^{j-k-1} \bx .
\end{split}
\end{equation*}
\makeviolet{
The first term is non-positive since $U_1 =-2\BH\leq 0$.
The second term is retained.
The third term can be estimated from above by $M_4\ t^4$ with
$M_4 := \sum_{j=4}^{\infty} \tfrac1{j!} (2\|B\|_2)^j$ by using \eqref{Uj:2}, \eqref{est:Uj}, and $\Delta^{(1)}_{j,k}<1$.
Altogether, we derive the estimate
}
\begin{equation} \label{est:g:C0:tilde}
 g(\bx;t) 
\leq \frac{1}{3!\ 2} \big((B\bx)^* U_1 B\bx\big)\ t^3 +M_4\ t^4 
 = \makeblue{-\frac{\mu_\bx}{3!} \ t^3 +M_4\ t^4 } .
\end{equation}

\medskip
To establish a uniform negative upper bound for $(B\bx)^* U_1 B\bx$ for $\bx\in\sphere$ with \makeblue{$\lambda_\bx \leq\makered{\delta}$}, we use \makeblue{
\eqref{lambda_x:mu_x:mHC_1b} to deduce $\mu_\bx\geq \kappa-\lambda_\bx \geq\kappa -\makered{\delta} >0$ \makered{since $\delta\in(0,\kappa)$}.
}
\makered{
For $\delta\in(0,\kappa)$, define
\begin{equation}
 \mu_\delta 
:= \min_{\bx\in\sphere \text{ with } \lambda_\bx \leq\delta} \mu_\bx
 = \min_{\bx\in\sphere \text{ with } \lambda_\bx \leq\delta} \ip{\bx}{B^* \BH B \bx}
\end{equation}
such that $\mu_\delta\geq \kappa-\delta>0$ and 
\begin{equation} \label{mu_delta:m_1}
 \mu_0
:=\lim_{\delta\to 0} \mu_\delta
 =\min_{\bx\in\ker\big(\tttT_0\big), \ \|\bx\|=1} \ip{\bx}{B^* \BH B \bx} .
\end{equation}
Then, we derive from~\eqref{est:g:C0:tilde} that 
\begin{equation} \label{est:g:C0:tilde:final:m_1}
 g(\bx;t)
 \leq -\frac{\makered{\mu_\delta}}{3!} t^3 +M_4\ t^4
 \qquad\text{ for all $\bx\in\sphere$ with $\lambda_\bx \leq\makered{\delta}$ and $t\in[0,1]$}.
\end{equation}
}

\smallskip
\noindent
\underline{\textit{Step 1b.}} 
\makeblue{Consider \fbox{$\bx\in\sphere$ with $\lambda_\bx >\makered{\delta}$} where \makered{$\delta\in(0,\kappa)$ will be chosen later and} $\kappa>0$ such that~\eqref{HC:kappa:mHC_1:app} holds.}
For $t\in[0,1]$, we deduce 
\begin{equation} \label{est:g:C0}
 g(\bx;t)
 := \bx^* \Big( \sum_{j=1}^{\infty} \frac{t^j}{j!}\ U_j \Big) \bx 
 = t \bx^* U_1 \bx +\sum_{j=2}^{\infty} \frac{t^j}{j!}\ \bx^* U_j \bx 
 \makeblue{ \le -2 \lambda_\bx t +M_2\ t^2} \,,
\end{equation}
since $\big| \sum_{j=2}^{\infty} \frac{t^j}{j!}\ \bx^* U_j \bx \big| \leq t^2 M_2$ with $M_2 :=\sum_{j=2}^{\infty} \frac{1}{j!}\ (2 \|B\|_2)^j$.
Then, \makeblue{$-2 \lambda_\bx t +M_2 t^2 \leq -\lambda_\bx t$ for all $0\leq t\leq {\lambda_\bx}/{M_2}$.}
For any given $c>0$, the estimate $-\lambda_\bx t \leq -c t^3$ holds if $0\leq t\leq \sqrt{ {\lambda_\bx}/{c}}$.
Define \makeblue{$t_\delta :=\min\{ {\makered{\delta}}/{ M_2}, \sqrt{ {\makered{\delta}}/{c}}, 1 \}$}.
Then, we derive
\begin{equation} \label{est:g:C0:final}
 g(\bx;t) \leq -c t^3 
\qquad \text{for all $\bx\in\sphere$ with \makeblue{$\lambda_\bx >\makered{\delta}$} and $t\in[0,t_\delta]$.}
\end{equation}

\smallskip
\makeviolet{
To sum up, choosing any $\delta\in(0,\kappa)$, the estimate~\eqref{est:g:C0:tilde:final:m_1} is derived.
Then, for $c:=\makered{\mu_\delta}/3!$, there exists a (sufficiently small) $t_\delta>0$ (as defined in \textit{Step 1b}) such that the estimate~\eqref{est:g:C0:final} holds.
Consequently, we obtain 
}
\makered{
\begin{equation} \label{est:g:m_1:mu_delta}
 g(\bx;t) 
\leq -\frac{\makered{\mu_\delta}}{3!} t^3 +M_4\ t^4 \quad \text{for all } \bx\in\sphere \text{ and } t\in[0,t_\delta].
\end{equation}
This shows~\eqref{est:Propagator} with $c_2 :=c/2 =\mu_\delta/(3!\ 2)$ and $a=2\mHC +1=3$.
}

\medskip
\noindent
\makeviolet{\underline{\textit{Step 1c.}}}
\makered{
To prove the second statement in Lemma~\ref{lm:Q:norm}, 
we improve the estimate of~$c$ as follows:
By definition, the time~$t_\delta$ satisfies $\lim_{\delta\to 0} t_\delta =0$.
To derive (the sharp) lower estimate~\eqref{c:lower_bound:mHC} on the multiplicative factor~$c$, 
we consider the Taylor expansion~\eqref{short-t-decay} of the propagator norm, use estimate~\eqref{est:g:m_1:mu_delta}, and take the limit $\delta\to 0$:
\begin{equation}
 -2c
=\lim_{\delta\to 0} \frac{\|e^{-B\ t_\delta}\|_2^2 -1}{t_\delta^3}
\makeviolet{=\lim_{\delta\to 0} \frac{g(\bx;t_\delta)}{t_\delta^3} }
\leq \lim_{\delta\to 0} \Big( -\frac{\mu_\delta}{3!} +\makeviolet{M_4\ t_\delta} \Big)
=-\frac{\mu_0}{3!} .
\end{equation}
Hence, we identified a lower estimate for the multiplicative factor~$c$ in~\eqref{short-t-decay} as 
\begin{equation}\label{c:lower_bound:m_1}
 c
\geq \frac{\mu_0}{3!\ 2} 
= \frac1{3!\ 2} \min_{\bx\in\ker\big(\tttT_0\big), \ \|\bx\|=1} \ip{\bx}{B^* \BH B \bx}.
\end{equation}
This finishes the proof of the second statement in Lemma~\ref{lm:Q:norm} in the case $\mHC=1$.
}

\bigskip
\noindent
\underline{\textit{Step 2.}} 
\textit{Matrices with hypocoercivity index~$\mHC\geq 2$.}
For matrices $B$ with hypocoercivity index~$\mHC\geq 2$,
\makeblue{
i.e. there exists $\kappa>0$ such that 
\begin{equation}\label{HC:kappa:mHC}
 \ttT_\mHC 
=\sum_{j=0}^{\mHC} (B^*)^j \BH B^j \geq \kappa I >0 \ , 
\end{equation}
}
we generalize this procedure as follows:
\makeblue{
We define, for $\bx\in\sphere$,
\begin{equation} \label{lambda_x:mu_x}
 \lambda_\bx :=\ip{\bx}{\ttT_{\mHC-1} \bx}\geq 0\ , \quad
 \mu_\bx :=\ip{\bx}{(B^*)^\mHC \BH B^\mHC \bx}\geq 0\ , \quad
 \text{such that } \lambda_\bx +\mu_\bx \geq \kappa >0\ .
\end{equation}
}

\noindent
\underline{\textit{Step 2a.}} 
\makered{Consider \fbox{$\bx\in\sphere$ with $\lambda_\bx \leq\delta$} where \makered{$\delta\in(0,\kappa)$ will be chosen later and} $\kappa>0$ such that~\eqref{HC:kappa:mHC} holds.}
For $t\in[0,1]$, we derive as in \textit{Step 2b} (see \eqref{id+est:g:Cl} below):
\begin{equation} \label{est:g:Cm}
 \begin{split}
g(\bx;t)
&\leq \frac{(B^{\mHC} \bx)^* U_1 B^{{\mHC}} \bx}{({2{\mHC}+1})! \binom{2{\mHC}}{\mHC}} t^{2{\mHC}+1}
 +M_{2{\mHC}+2}\ t^{2{\mHC}+2} 
 \\
&= -\frac{2}{({2{\mHC}+1})! \binom{2{\mHC}}{\mHC}} \mu_\bx t^{2{\mHC}+1}
 +M_{2{\mHC}+2}\ t^{2{\mHC}+2} \,,
 \end{split}
\end{equation}
with $M_{2{\mHC}+2} := \sum_{j=2{\mHC}+2}^{\infty} \tfrac1{j!} (2\|B\|_2)^j >0$.
\makeviolet{
To establish a uniform negative upper bound for $-\mu_\bx$ for $\bx\in\sphere$ with $\lambda_\bx \leq\makered{\delta}$, we use~\eqref{lambda_x:mu_x} to deduce  $\mu_\bx\geq \kappa-\lambda_\bx \geq\kappa-\makered{\delta} >0$
since $\delta\in(0,\kappa)$.
For $\delta\in(0,\kappa)$, define
\begin{equation}
 \mu_\delta 
:= \min_{\bx\in\sphere \text{ with } \lambda_\bx \leq\delta} \mu_\bx
 = \min_{\bx\in\sphere \text{ with } \lambda_\bx \leq\delta} \ip{\bx}{(B^*)^\mHC \BH B^\mHC \bx}
\end{equation}
such that $\mu_\delta\geq \kappa-\delta>0$ and 
\begin{equation} \label{mu_delta:mHC}
 \mu_0
:=\lim_{\delta\to 0} \mu_\delta
 =\min_{\bx\in\ker\big(\tttT_{\mHC-1}\big), \ \|\bx\|=1} \ip{\bx}{(B^*)^\mHC \BH B^\mHC \bx} .
\end{equation}
Then, we derive from~\eqref{est:g:Cm} that
\begin{subequations} \label{est:g:mHC:lambda_leq_delta}
\begin{equation} 
 g(\bx;t)
 \leq -c\ t^{2{\mHC}+1} +M_{2{\mHC}+2}\ t^{2{\mHC}+2}
 \qquad\text{ for all $\bx\in\sphere$ with $\lambda_\bx \leq\makered{\delta}$ and $t\in[0,1]$} ,
\end{equation}
where
\begin{equation} \label{est:g:mHC:lambda_leq_delta:c} 
 c:=\frac{2\mu_\delta}{({2{\mHC}+1})!\ \binom{2{\mHC}}{\mHC}} 
\ .
\end{equation}
\end{subequations}
}

\smallskip
\noindent
\underline{\textit{Step 2b.}} 
Next, we \makeblue{shall} show the following statement:
\makeblue{Consider \fbox{$\bx\in\sphere$ with $\lambda_\bx >\makered{\delta}$} where $\kappa>0$ such that~\eqref{HC:kappa:mHC}} holds.
For given $c>0$, there exists $t_\delta>0$ such that
\begin{equation} \label{est:g:II}
g(\bx;t) \leq -c\ t^{2\mHC +1} \qquad\text{for all $\bx\in\sphere$ with \makeblue{$\lambda_\bx > \makered{\delta}$} and $t\in[0,t_\delta]$.} 
\end{equation}

First, we decompose the sphere~$\sphere$ into the (non-disjoint) closed subsets
\begin{equation} \label{setsC}
\begin{split}
\setC_0 &:= \{ \bx\in\sphere\ |\ \bx^* U_1 \bx\leq -\epsilon \} \,, \\
\setC_1 &:= \{ \bx\in\sphere\ |\ -\epsilon \leq \bx^* U_1 \bx\ \land\ (B\bx)^* U_1 B\bx\leq -\epsilon \} \,, \\
\setC_2 &:= \{ \bx\in\sphere\ |\ -\epsilon \leq \bx^* U_1 \bx\ \land\ -\epsilon \leq (B\bx)^* U_1 B\bx\ 
\land\ (B^2 \bx)^* U_1 B^2 \bx\leq -\epsilon \} \,, \\
&\vdots \\
\setC_m &:= \{ \bx\in\sphere\ |\ \forall k\in\{0,\ldots,m-1\}:\ -\epsilon \leq (B^k \bx)^* U_1 B^k \bx\ 
\land\ (B^m \bx)^* U_1 B^m \bx\leq -\epsilon \} \,, \\
&\vdots \\
\setC_{\mHC-1} &:= \{ \bx\in\sphere\ |\ \forall k\in\{0,\ldots,\mHC-2\}:\ -\epsilon \leq (B^k \bx)^* U_1 B^k \bx\ \\ 
&\hspace{150pt}  \land\ (B^{\mHC-1} \bx)^* U_1 B^{\mHC-1} \bx\leq -\epsilon \} \,, 
\end{split}
\end{equation}
as well as $\setC_{\mHC} := \{ \bx\in\sphere\ |\ \forall k\in\{0,\ldots,\mHC-1\}:\ -\epsilon \leq (B^k \bx)^* U_1 B^k \bx\ \}$, for some positive parameter $\epsilon$ to be determined next.

We show that there exists $\epsilon >0$ such that 
$\setC_{\mHC} \subseteq \{ \bx\in\sphere\ |\ \makeblue{\lambda_\bx \leq\makered{\delta}} \}$:
Consider $\bx\in\setC_{\mHC}$.
Then, $\bx$ satisfies
\[
 \epsilon/2 \geq \ip{ \bx}{ (B^*)^k B_H B^k \bx} 
\quad\text{ for } k=0,\ldots,\mHC-1
\ ,
\]
which (upon summing up) implies that 
\[
 \mHC \epsilon/2
\geq \ip{ \bx}{ \sum_{k=0}^{\mHC-1} (B^*)^k B_H B^k\ \bx}
= \ip{ \bx}{ \ttT_{\mHC-1} \bx}
= \makeblue{\lambda_\bx}
\ .
\]
Choosing 
\begin{equation} \label{epsilon:alpha_0}
 \epsilon :=\frac{\makered{2\delta}}{ \mHC}
\quad\text{ implies }
 \setC_{\mHC} \subseteq \{ \bx\in\sphere\ |\ \makeblue{\lambda_\bx \leq\makered{\delta}} \} \ .
\end{equation}
Hence, the \makeblue{already established} estimate~\eqref{est:g:mHC:lambda_leq_delta} holds \makeblue{in particular} for $\bx\in\setC_{\mHC}$ and $t\in[0,\,1]$.
Using $\bigcup_{j=0}^{\mHC-1} \setC_j \supseteq \{\bx\in\sphere\ |\ \makeblue{\lambda_\bx > \makered{\delta}}\}$ \makeblue{(as the complementary inclusion of \eqref{epsilon:alpha_0})}, we are left to prove the estimate~\eqref{est:g:II} for all $\bx\in\bigcup_{j=0}^{\mHC-1} \setC_j$:

\smallskip
For all $\ell\in\{0,\ldots,\mHC-1\}$, \fbox{$\bx\in\setC_{\ell}$} and $t\in[0,1]$, the key idea (to estimate $g(\bx;t)$) is to collect the terms $t^j$ of order $j$ less than $2\ell +1$ in a quadratic form which is non-positive.
Therefore, we use again~\eqref{Uj:2} and Lemma~\ref{lm:sum-of-squares} with $U=-B^*$, $V=U_1$, $W=-B$ and $m=\ell-1$, to \makeviolet{rewrite $g(\bx;t)$ as}
\begin{subequations} \label{id+est:g:Cl}
\begin{equation} \label{id:g:Cl}
\begin{split}
g(\bx;t)
&= \bx^* \Big( \sum_{j=1}^{\infty} \frac{t^j}{j!}\ U_j \Big) \bx 
\\
&= \sum_{j=1}^{\infty} \frac{t^j}{j!} \sum_{k=0}^{j-1} \tbinom{j-1}{k} ((-B)^k \bx)^* U_1 (-B)^{j-k-1} \bx 
\\
&= \sum_{j=0}^{\ell-1} \frac{t^{2j+1}}{(2j+1)!} \frac1{\binom{2j}{j}}
    \Bigg( \sum_{k=0}^{\infty} \tfrac{(2j+1)!}{(k+2j+1)!} \tbinom{k+j}{j} t^k ((-B)^{k+j} \bx)^* \Bigg) U_1
    \Bigg( \sum_{k=0}^{\infty} \tfrac{(2j+1)!}{(k+2j+1)!} \tbinom{k+j}{j} t^k (-B)^{k+j} \bx \Bigg) 
\\
&\quad + \frac{t^{2\ell+1}}{({2\ell+1})!} \binom{2\ell}{\ell} \Delta^{(\ell)}_{{2\ell+1},\ell} ((-B)^\ell \bx)^* U_1 (-B)^{\ell} \bx 
\\
&\quad +\sum_{j=2\ell+2}^{\infty} \frac{t^j}{j!} \sum_{k=\ell}^{j-\ell-1} \binom{j-1}{k} \Delta^{(\ell)}_{j,k} ((-B)^k \bx)^* U_1 (-B)^{j-k-1} \bx .
\end{split}
\end{equation}
\makeviolet{
The first term is non-positive since $U_1 =-2\BH\leq 0$.
The second term is estimated using the assumption $\bx\in\setC_{\ell}$ and the identity $\Delta^{(\ell)}_{{2\ell+1},\ell} =\binom{2\ell}{\ell}^{-2}$.
The third term can be estimated from above by $M_{2\ell+2}\ t^{2\ell+2}$ with
$M_{2\ell+2} := \sum_{j=2\ell+2}^{\infty} \tfrac1{j!} (2\|B\|_2)^j >0$ by using \eqref{Uj:2}, \eqref{est:Uj}, and $\Delta^{(\ell)}_{j,k}\leq 1$ for $0\leq\ell\leq k\leq j-\ell-1$.
Altogether, we obtain the estimate
}
\begin{equation} \label{est:g:Cl}
g(\bx;t)
 \leq -\frac{\epsilon}{({2\ell+1})! \binom{2\ell}{\ell}} t^{2\ell+1} +M_{2\ell+2}\ t^{2\ell+2} .
\end{equation}
\end{subequations}

For given $c>0$ (e.g. as in~\eqref{est:g:mHC:lambda_leq_delta:c}), there exists $\tilde t_{\ell}>0$ \makered{(depending on $\epsilon$, with $\lim_{\epsilon\to 0} \tilde{t}_\ell =0$)} such that
\begin{equation} \label{est:g:Cl:final}
g(\bx;t) \leq -c\ t^{2\mHC +1} \qquad\text{for all $\bx\in\setC_{\ell}$ and $t\in[0,\tilde t_\ell]$.}  
\end{equation}
Choosing $\epsilon =\makered{2\delta/\mHC}$ as in~\eqref{epsilon:alpha_0} (such that $\bigcup_{j=0}^{\mHC-1} \setC_j \supseteq \{\bx\in\sphere\ |\ \makeblue{\lambda_\bx > \makered{\delta}} \}$) and
\begin{equation} \label{est:g:II:t_0}
 t_\delta := \min\{\tilde  t_\ell\ , \ \ell=0,\ldots,\mHC-1 \} \ ,
\end{equation}
implies the estimate~\eqref{est:g:II}.

\medskip 
\makeblue{
To sum up, for \makered{any} fixed $\delta\in(0,\kappa)$,
the estimate~\eqref{est:g:mHC:lambda_leq_delta} with multiplicative constant~$c$ in~\eqref{est:g:mHC:lambda_leq_delta:c} is proven in~\textit{Step~2a}.
Then, for $\epsilon =\makered{2\delta}/\mHC$ and $c$ in~\eqref{est:g:mHC:lambda_leq_delta:c}, there exists a (sufficiently small) $t_\delta>0$ \makered{(as defined in~\eqref{est:g:II:t_0})} such that the estimate~\eqref{est:g:II} holds.
\makeviolet{
Consequently, we obtain 
\begin{equation} \label{est:g:mHC:mu_delta}
 g(\bx;t)
 \leq -\frac{2\mu_\delta}{({2{\mHC}+1})! \binom{2{\mHC}}{\mHC}} t^{2{\mHC}+1} +M_{2{\mHC}+2}\ t^{2{\mHC}+2}
 \qquad\text{ for all $\bx\in\sphere$ with $t\in[0,t_\delta]$.}
\end{equation}
}
This shows~\eqref{est:Propagator} with $c_2:=c/2$, $c$ as defined in~\eqref{est:g:mHC:lambda_leq_delta:c} and $a=2\mHC +1$.
This finishes the proof of the first statement in Lemma~\ref{est:Propagator} for $\mHC\in\N$.
}

\medskip
\noindent
\makeviolet{\underline{\textit{Step 2c.}}}
\makered{
To prove the second statement in Lemma~\ref{lm:Q:norm}, we improve the estimate of~\makeviolet{$c$} as follows:
By definition, the time~$t_\delta$ depends on $\delta$ (since $\epsilon=2\delta/\mHC$) such that $\lim_{\delta\to 0} t_\delta =0$. 
To derive the (sharp) lower estimate~\eqref{c:lower_bound:mHC} on the multiplicative factor~$c$, we consider the Taylor expansion~\eqref{short-t-decay} of the propagator norm, use estimate~\eqref{est:g:mHC:mu_delta}, and take the limit $\delta\to 0$ such that 
\begin{equation}
 -2c
=\lim_{\delta\to 0} \frac{\|e^{-B\ t_\delta}\|_2^2 -1}{t_\delta^{2\mHC+1}}
\leq \lim_{\delta\to 0} \Bigg(-\frac{2\mu_\delta}{({2{\mHC}+1})! \binom{2{\mHC}}{\mHC}} +M_{2{\mHC}+2}\ t_\delta \Bigg)
=-\frac{2\mu_0}{({2{\mHC}+1})! \binom{2{\mHC}}{\mHC}} .
\end{equation}
This proves the lower estimate~\eqref{c:lower_bound:mHC} for the multiplicative factor~$c$ in~\eqref{short-t-decay}.
}
\end{proof}

The proof of Lemma~\ref{lm:Q:norm} uses the following identity:
\begin{lm} \label{lm:sum-of-squares}
Let $U, V, W\in\Cnn$. For all $m\in\N_0$, the following identity holds
\begin{equation} \label{id:sum-of-squares}
 \begin{split}
&\sum_{j=1}^{\infty} \frac{t^j}{j!} \sum_{k=0}^{j-1} \tbinom{j-1}{k} U^k V W^{j-k-1}
\\
&= \sum_{j=0}^m \frac{t^{2j+1}}{(2j+1)!} \frac1{\binom{2j}{j}}
       \Bigg( \sum_{k=0}^{\infty} \frac{(2j+1)!}{(k+2j+1)!} \binom{k+j}{j} t^k U^{k+j} \Bigg) V
       \Bigg( \sum_{\ell=0}^{\infty} \frac{(2j+1)!}{(\ell+2j+1)!} \binom{\ell+j}{j} t^\ell W^{\ell+j} \Bigg) 
\\
&\quad +\sum_{j=2m+3}^{\infty} \frac{t^j}{j!} \sum_{k=m+1}^{j-m-2} \binom{j-1}{k} \Delta^{(m+1)}_{j,k} U^k V W^{j-k-1} \,,
 \end{split}
\end{equation}
where $\Delta^{(m)}_{j,k} :=\frac{\binom{k}{m} \binom{j-k-1}{m}}{\binom{k+m}{m} \binom{j-k-1+m}{m}}$ for all \makeblue{$m\leq k$ and $m\leq j-k-1$}.
\end{lm}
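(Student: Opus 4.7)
The plan is to prove the identity by induction on $m \in \N_0$, comparing coefficients of the non-commutative monomial $t^j U^k V W^{j-k-1}$ on both sides. Since both sides are formal power series in $t$ whose coefficients are polynomial expressions in $U$, $V$, $W$, and since the sums converge absolutely for bounded $t$, this is a legitimate term-by-term comparison. Throughout, the LHS has coefficient $\binom{j-1}{k}/j!$ for $j \ge 1$, $0 \le k \le j-1$.

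For the base case $m=0$, the only main term on the RHS is the one corresponding to $j=0$, namely
$t \Big(\sum_{k\ge 0} \tfrac{t^k U^k}{(k+1)!}\Big) V \Big(\sum_{\ell \ge 0} \tfrac{t^\ell W^\ell}{(\ell+1)!}\Big)$. Multiplying the two series and reindexing by $j := k+\ell+1$ shows its coefficient at $t^j U^k V W^{j-k-1}$ is $\tfrac{1}{(k+1)!(j-k)!}$. Using $\Delta^{(1)}_{j,k} = \tfrac{k(j-k-1)}{(k+1)(j-k)}$, one checks directly that
\[
\frac{\binom{j-1}{k}}{j!} - \frac{1}{(k+1)!(j-k)!} = \frac{\binom{j-1}{k}}{j!}\,\Delta^{(1)}_{j,k},
\]
which equals the remainder coefficient. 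The factor $\Delta^{(1)}_{j,k}$ vanishes exactly for $k \in \{0, j-1\}$, recovering the range $k\in[1,j-2]$ and $j \ge 3$ in the remainder.

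For the inductive step, assume the identity holds at level $m$. Applied to $m+1$, the claim is equivalent to decomposing the level-$m$ remainder
$R_m(t) = \sum_{j\ge 2m+3} \tfrac{t^j}{j!} \sum_{k=m+1}^{j-m-2} \binom{j-1}{k}\Delta^{(m+1)}_{j,k} U^k V W^{j-k-1}$
as $S_{m+1}(t) + R_{m+1}(t)$, where $S_{m+1}(t)$ is the $(m+1)$-th main term and $R_{m+1}(t)$ is the level-$(m+1)$ remainder. Expanding $S_{m+1}$ by multiplying the two infinite series and re-indexing $(k,\ell) \mapsto (K,L)=(k+m+1,\ell+m+1)$ shows
\[
 S_{m+1}(t) = \sum_{j\ge 2m+3} t^j \sum_{K=m+1}^{j-m-2} c^{(m+1)}_{j,K}\, U^K V W^{j-K-1},
\qquad
c^{(m+1)}_{j,K} := \frac{(2m+3)!}{\binom{2m+2}{m+1}}\cdot\frac{\binom{K}{m+1}\binom{j-K-1}{m+1}}{(K+m+2)!\,(j-K+m+1)!}.
\]
The inductive step therefore reduces to verifying the purely scalar identity
\[
\frac{\binom{j-1}{k}}{j!}\,\Delta^{(m+1)}_{j,k} \;=\; c^{(m+1)}_{j,k} + \frac{\binom{j-1}{k}}{j!}\,\Delta^{(m+2)}_{j,k}
\]
for every $(j,k)$ with $j \ge 2m+3$ and $k\in[m+1,j-m-2]$. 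At the boundary $k \in \{m+1, j-m-2\}$ one has $\Delta^{(m+2)}_{j,k}=0$ because either $\binom{k}{m+2}$ or $\binom{j-k-1}{m+2}$ vanishes, which accounts cleanly for the index range $[m+2, j-m-3]$ of $R_{m+1}$.

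The main obstacle is establishing the scalar identity above, as it is an unenlightening factorial manipulation. A slightly cleaner route is to bypass the induction and argue directly: the statement to prove is equivalent to
\[
\frac{\binom{j-1}{k}}{j!} \;=\; \sum_{\mu=0}^{\min(k,j-k-1)} \frac{(2\mu+1)!}{\binom{2\mu}{\mu}}\,\frac{\binom{k}{\mu}\binom{j-k-1}{\mu}}{(k+\mu+1)!\,(j-k+\mu)!}
\]
for each fixed $(j,k)$, because the $m$-th partial sum plus $\tfrac{\binom{j-1}{k}}{j!}\Delta^{(m+1)}_{j,k}$ must equal $\tfrac{\binom{j-1}{k}}{j!}$ for every $m$. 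Once this single combinatorial identity is established (for instance by rewriting the summand via beta-function integrals, $\tfrac{1}{(k+\mu+1)!(j-k+\mu)!} = \tfrac{1}{(j+2\mu+1)!}\binom{j+2\mu+1}{k+\mu+1}$, or by a hypergeometric evaluation), both base case and inductive step follow immediately by the telescoping decomposition above.
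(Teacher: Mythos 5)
Your approach mirrors the paper's: induction on $m$, with the inductive step reducing the level-$m$ remainder to the $(m{+}1)$-st main term plus the level-$(m{+}1)$ remainder. The coefficient computation for the main term is correct: after reindexing $(k,\ell)\mapsto(K,L)=(k{+}m{+}1,\ell{+}m{+}1)$ and setting $j=K+L+1$, one does obtain
$c^{(m+1)}_{j,K}=\frac{(2m+3)!}{\binom{2m+2}{m+1}}\,\frac{\binom{K}{m+1}\binom{j-K-1}{m+1}}{(K+m+2)!\,(j-K+m+1)!}$, and the observation that $\Delta^{(m+2)}_{j,k}$ vanishes at $k\in\{m+1,\,j-m-2\}$ correctly accounts for the shrinking index range.

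The gap is that you never actually verify the scalar identity
\[
\frac{\binom{j-1}{k}}{j!}\,\Delta^{(m+1)}_{j,k}
= c^{(m+1)}_{j,k} + \frac{\binom{j-1}{k}}{j!}\,\Delta^{(m+2)}_{j,k},
\]
which is the entire content of the inductive step; you flag it as ``unenlightening factorial manipulation'' and propose an (also unverified) telescoping reformulation via beta-function or hypergeometric methods. As written, the proof therefore stops one step short. The fix is short: using $\Delta^{(m+2)}_{j,k}=\Delta^{(m+1)}_{j,k}\,\frac{(k-m-1)(j-k-m-2)}{(k+m+2)(j-k+m+1)}$ (the recursion the paper uses), the identity above cancels the common factor $\frac{(m+1)!^2\,\binom{k}{m+1}\binom{j-k-1}{m+1}}{(k+m+1)!\,(j-k+m)!}$ on both sides and reduces to the elementary polynomial identity
\[
(k+m+2)(j-k+m+1) \;=\; (2m+3)\,j + (k-m-1)(j-k-m-2),
\]
which is checked by expanding both sides. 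Once you insert this verification, your argument is complete and is, structurally, the same induction as in the paper.
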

\begin{proof}
%
We will prove the identity by induction.
For $m=0$, we have to prove the identity
\begin{multline*}
\sum_{j=1}^{\infty} \frac{t^j}{j!} \sum_{k=0}^{j-1} \tbinom{j-1}{k} U^k V W^{j-k-1} \\
= t \Bigg( \sum_{k=0}^{\infty} \tfrac{1}{(k+1)!} t^k U^k \Bigg) V 
\Bigg( \sum_{\ell=0}^{\infty} \tfrac{1}{(\ell+1)!} t^\ell W^\ell \Bigg) 
+\sum_{j=3}^{\infty} \frac{t^j}{j!} \sum_{k=1}^{j-2} \tbinom{j-1}{k} \tfrac{k (j-k-1)}{(k+1) (j-k)} U^k V W^{j-k-1}
\end{multline*} 
since $\Delta^{(1)}_{j,k} =\frac{\binom{k}{1} \binom{j-k-1}{1}}{\binom{k+1}{1} \binom{j-k}{1}} =\tfrac{k (j-k-1)}{(k+1) (j-k)}$.
The first term on the right hand side can be written by the Cauchy product formula as
\begin{align*}
& t \Bigg( \sum_{k=0}^{\infty} \tfrac{1}{(k+1)!} t^k U^k \Bigg) V 
\Bigg( \sum_{\ell=0}^{\infty} \tfrac{1}{(\ell+1)!} t^\ell W^\ell \Bigg) 
\\
&= t \sum_{j=0}^{\infty} t^j \sum_{k=0}^{j} \tfrac{1}{(k+1)!} U^k V \tfrac{1}{(j-k+1)!} W^{j-k} 
= \sum_{j=0}^{\infty} t^{j+1} \sum_{k=0}^{j} \tfrac{1}{(k+1)!} U^k V \tfrac{1}{(j-k+1)!} W^{j-k} 
\\
&= \sum_{j=1}^{\infty} \frac{t^j}{j!} \sum_{k=0}^{j-1} \tfrac{j!}{(k+1)! (j-k)!} U^k V W^{j-k-1} 
= \sum_{j=1}^{\infty} \frac{t^j}{j!} \sum_{k=0}^{j-1} \tbinom{j-1}{k} \tfrac{j}{(k+1) (j-k)} U^k V W^{j-k-1} 
\\
&= \sum_{j=1}^{2} \frac{t^j}{j!} \sum_{k=0}^{j-1} \tbinom{j-1}{k} U^k V W^{j-k-1} 
+\sum_{j=3}^{\infty} \frac{t^j}{j!} \sum_{k=0}^{j-1} \tbinom{j-1}{k} \tfrac{j}{(k+1) (j-k)} U^k V W^{j-k-1} \,.
\end{align*}
Therefore,
\begin{align*}
&t \Bigg( \sum_{k=0}^{\infty} \tfrac{1}{(k+1)!} t^k U^k \Bigg) V 
       \Bigg( \sum_{\ell=0}^{\infty} \tfrac{1}{(\ell+1)!} t^\ell W^\ell \Bigg) 
     +\sum_{j=3}^{\infty} \frac{t^j}{j!} \sum_{k=1}^{j-2} \tbinom{j-1}{k} \tfrac{k (j-k-1)}{(k+1) (j-k)} U^k V W^{j-k-1} 
\\
&= \sum_{j=1}^{2} \frac{t^j}{j!} \sum_{k=0}^{j-1} \tbinom{j-1}{k} U^k V W^{j-k-1} 
     +\sum_{j=3}^{\infty} \frac{t^j}{j!} \sum_{k=0}^{j-1} \tbinom{j-1}{k} \tfrac{j}{(k+1) (j-k)} U^k V W^{j-k-1}
     +\sum_{j=3}^{\infty} \frac{t^j}{j!} \sum_{k=1}^{j-2} \tbinom{j-1}{k} \tfrac{k (j-k-1)}{(k+1) (j-k)} U^k V W^{j-k-1} 
\\     
&= \sum_{j=1}^{\infty} \frac{t^j}{j!} \sum_{k=0}^{j-1} \tbinom{j-1}{k} U^k V W^{j-k-1} \,.
\end{align*} 
We assume that the formula holds for $m\in\N_0$ and prove it for $m+1$.
First, we use again the Cauchy product formula to derive
\begin{align*}
&\frac{t^{2m+3}}{(2m+3)!} \frac1{\binom{2m+2}{m+1}}
   \Bigg( \sum_{k=0}^{\infty} \tfrac{(2m+3)!}{(k+2m+3)!} \tbinom{k+m+1}{m+1} t^k U^{k+m+1} \Bigg) V
   \Bigg( \sum_{\ell=0}^{\infty} \tfrac{(2m+3)!}{(\ell+2m+3)!} \tbinom{\ell+m+1}{m+1} t^\ell W^{\ell+m+1} \Bigg) 
\\
&= \frac{t^{2m+3}}{(2m+3)!} \frac1{\binom{2m+2}{m+1}} 
     \sum_{j=0}^{\infty} t^j \sum_{k=0}^{j} \tfrac{(2m+3)!}{(k+2m+3)!} \tbinom{k+m+1}{m+1} U^{k+m+1} V \tfrac{(2m+3)!}{(j-k+2m+3)!} \tbinom{j-k+m+1}{m+1} W^{j-k+m+1} 
\\
&= \sum_{j=0}^{\infty} \frac{t^{j+2m+3}}{(j+2m+3)!} \sum_{k=0}^{j} \tfrac{(j+2m+3)!}{(2m+3)! \binom{2m+2}{m+1}} \tfrac{(2m+3)!}{(k+2m+3)!} \tbinom{k+m+1}{m+1} U^{k+m+1} V \tfrac{(2m+3)!}{(j-k+2m+3)!} \tbinom{j-k+m+1}{m+1} W^{j-k+m+1} 
\\
&= \sum_{j=2m+3}^{\infty} \frac{t^j}{j!} \sum_{k=0}^{j-2m-3} \tfrac{j!}{(2m+3)! \binom{2m+2}{m+1}} \tfrac{(2m+3)!}{(k+2m+3)!} \tbinom{k+m+1}{m+1} U^{k+m+1} V \tfrac{(2m+3)!}{(j-k)!} \tbinom{j-k-m-2}{m+1} W^{j-k-m-2} 
\\
&= \sum_{j=2m+3}^{\infty} \frac{t^j}{j!} \sum_{k=m+1}^{j-m-2} \tfrac{j!}{(2m+3)! \binom{2m+2}{m+1}} \tfrac{(2m+3)!}{(k+m+2)!} \tbinom{k}{m+1} \tfrac{(2m+3)!}{(j-k+m+1)!} \tbinom{j-k-1}{m+1} U^{k} V W^{j-k-1} \,.
\end{align*}
Therefore,
\begin{align}
&\sum_{j=0}^{m+1} \frac{t^{2j+1}}{(2j+1)!} \frac1{\binom{2j}{j}}
       \Bigg( \sum_{k=0}^{\infty} \frac{(2j+1)!}{(k+2j+1)!} \binom{k+j}{j} t^k U^{k+j} \Bigg) V
       \Bigg( \sum_{\ell=0}^{\infty} \frac{(2j+1)!}{(\ell+2j+1)!} \binom{\ell+j}{j} t^\ell W^{\ell+j} \Bigg) \nonumber 
\\
&\quad +\sum_{j=2m+5}^{\infty} \frac{t^j}{j!} \sum_{k=m+2}^{j-m-3} \binom{j-1}{k} \Delta^{(m+2)}_{j,k} U^k V W^{j-k-1} \nonumber 
\\
&=\sum_{j=0}^{m} \frac{t^{2j+1}}{(2j+1)!} \frac1{\binom{2j}{j}}
  \Bigg( \sum_{k=0}^{\infty} \frac{(2j+1)!}{(k+2j+1)!} \binom{k+j}{j} t^k U^{k+j} \Bigg) V
  \Bigg( \sum_{\ell=0}^{\infty} \frac{(2j+1)!}{(\ell+2j+1)!} \binom{\ell+j}{j} t^\ell W^{\ell+j} \Bigg) \nonumber 
\\
&\quad +\sum_{j=2m+3}^{\infty} \frac{t^j}{j!} \sum_{k=m+1}^{j-m-2} \tfrac{j!}{(2m+3)! \binom{2m+2}{m+1}} \tfrac{(2m+3)!}{(k+m+2)!} \tbinom{k}{m+1} \tfrac{(2m+3)!}{(j-k+m+1)!} \tbinom{j-k-1}{m+1} U^{k} V W^{j-k-1} \nonumber 
\\
&\quad +\sum_{j=2m+5}^{\infty} \frac{t^j}{j!} \sum_{k=m+2}^{j-m-3} \binom{j-1}{k} \Delta^{(m+2)}_{j,k} U^k V W^{j-k-1} \,. \label{last-id}
\end{align}  
Using $\Delta^{(m+2)}_{j,k} =\Delta^{(m+1)}_{j,k} \frac{(k-m-1)(j-k-m-2)}{(k+m+2)(j-k+m+1)}$ we deduce that~\eqref{last-id} equals:
\begin{align*}
  &=\sum_{j=0}^{m} \frac{t^{2j+1}}{(2j+1)!} \frac1{\binom{2j}{j}}
       \Bigg( \sum_{k=0}^{\infty} \frac{(2j+1)!}{(k+2j+1)!} \binom{k+j}{j} t^k U^{k+j} \Bigg) V
       \Bigg( \sum_{\ell=0}^{\infty} \frac{(2j+1)!}{(\ell+2j+1)!} \binom{\ell+j}{j} t^\ell W^{\ell+j} \Bigg) 
\\
  &\quad +\sum_{j=2m+3}^{\infty} \frac{t^j}{j!} \sum_{k=m+1}^{j-m-2} \tbinom{j-1}{k} \Delta^{(m+1)}_{j,k} \tfrac{(2m+3) j}{(k+m+2) (j-k+m+1)} U^{k} V W^{j-k-1} 
\\
  &\quad +\sum_{j=2m+5}^{\infty} \frac{t^j}{j!} \sum_{k=m+2}^{j-m-3} \tbinom{j-1}{k} \Delta^{(m+1)}_{j,k} \frac{(k-m-1)(j-k-m-2)}{(k+m+2)(j-k+m+1)} U^k V W^{j-k-1} 
\\
  &= \sum_{j=0}^m \frac{t^{2j+1}}{(2j+1)!} \frac1{\binom{2j}{j}}
       \Bigg( \sum_{k=0}^{\infty} \frac{(2j+1)!}{(k+2j+1)!} \binom{k+j}{j} t^k U^{k+j} \Bigg) V
       \Bigg( \sum_{\ell=0}^{\infty} \frac{(2j+1)!}{(\ell+2j+1)!} \binom{\ell+j}{j} t^\ell W^{\ell+j} \Bigg) 
\\
  &\quad +\sum_{j=2m+3}^{\infty} \frac{t^j}{j!} \sum_{k=m+1}^{j-m-2} \binom{j-1}{k} \Delta^{(m+1)}_{j,k} U^k V W^{j-k-1} 
\\
  &=\sum_{j=1}^{\infty} \frac{t^j}{j!} \sum_{k=0}^{j-1} \tbinom{j-1}{k} U^k V W^{j-k-1} \,,
\end{align*}
where we used the induction hypothesis, i.e.~\eqref{id:sum-of-squares}, in the final equality.
This finishes the proof.
\end{proof}

\makered{
To determine the optimal multiplicative factor~$c$ in~\eqref{short-t-decay} (see Theorem~\ref{th:HC-decay}\ref{th:HC-decay:b}), we shall next derive an improved upper estimate for~$c$, compared to~\eqref{c:lower_bound}:
}

\makered{
\begin{lm}\label{lm:c:upper_bound}
Let the ODE system~\eqref{ODE:B} be conservative-dissipative, and let the system matrix $B$ be (hypo)coercive with hypocoercivity index $\mHC\in\N$.
Then, the multiplicative factor~$c$ in~\eqref{short-t-decay} satisfies
\begin{equation}\label{c:upper_bound:mHC} 
 c\leq c_1
\qquad\text{with }
 c_1
:=\frac1{(2\mHC+1)! \binom{2\mHC}{\mHC}} \min_{\bx_0\in  \ker\big(\tttT_{\mHC-1}\big), \ \|\bx_0\|_2=1} \ip{B^\mHC \bx_0}{\BH B^\mHC \bx_0}.
\end{equation}
\end{lm}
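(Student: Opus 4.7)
My plan is to sharpen the lower bound on $\|P(t)\|_2$ by exhibiting a suitable one-parameter family of unit vectors, rather than relying on a single trajectory as in the proof of the lower bound~\eqref{c:lower_bound}. Since $\|P(t)\|_2 = \sup_{\|\bx\|_2=1}\|e^{-Bt}\bx\|_2$, any family $\{\bx_\tau\}_{\tau\ge 0}$ of unit vectors yields $\|P(t)\|_2 \ge \|e^{-Bt}\bx_t\|_2$ once we set $\tau = t$. The goal is to construct such a family satisfying
\begin{equation*}
\|e^{-Bt}\bx_t\|_2^2 \;=\; 1 \;-\; \frac{2\,\ip{B^\mHC \bx_0}{\BH B^\mHC \bx_0}}{(2\mHC+1)!\binom{2\mHC}{\mHC}}\, t^{2\mHC+1} \;+\; \bigO(t^{2\mHC+2}),
\end{equation*}
where $\bx_0 \in \ker(\tttT_{\mHC-1})$, $\|\bx_0\|_2=1$, is a minimiser in~\eqref{c:upper_bound:mHC}. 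Taking square roots and comparing with $\|P(t)\|_2 = 1 - c\,t^{2\mHC+1} + \bigO(t^{2\mHC+2})$ from Theorem~\ref{th:HC-decay}\ref{th:HC-decay:a} will then give $c \le c_1$.

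For the explicit construction I would set $\bx_\tau := \tilde\bx_\tau/\|\tilde\bx_\tau\|_2$ with $\tilde\bx_\tau := \sum_{k=0}^{\mHC}\beta_k\,\tau^k\, B^k \bx_0$, $\beta_0 := 1$, and real coefficients $\beta_1,\ldots,\beta_\mHC$ to be determined. This generalises Example~\ref{ex:envelope}, where for $\mHC=1$ the choice $\beta_1 = 1/2$ reproduces (to first order in $\tau$) the family $e^{\tau B/2}\bx_0$. To fix the $\beta_k$ I would expand $\ip{\tilde\bx_t}{Q(t)\tilde\bx_t}$, with $Q(t) = e^{-B^*t}e^{-Bt}$, via the sum-of-squares identity of Lemma~\ref{lm:sum-of-squares} applied with $m = \mHC-1$, $U = -B^*$, $V = U_1 = -2\BH$, $W = -B$:
\begin{equation*}
\ip{\tilde\bx_t}{Q(t)\tilde\bx_t} - \|\tilde\bx_t\|_2^2 \;=\; -2\sum_{j=0}^{\mHC-1} \frac{t^{2j+1}}{(2j+1)!\binom{2j}{j}}\,\|\psi_j(t,\tilde\bx_t)\|_2^2 \;+\; \ip{\tilde\bx_t}{R_{\mHC-1}(t)\tilde\bx_t},
\end{equation*}
where $\psi_j(t,\bx) := \sqrt{\BH}\sum_{\ell\ge 0}\alpha_{j,\ell}\,t^\ell(-B)^{\ell+j}\bx$ with $\alpha_{j,\ell} := \tfrac{(2j+1)!}{(\ell+2j+1)!}\binom{\ell+j}{j}$, and the remainder $R_{\mHC-1}(t) = \bigO(t^{2\mHC+1})$.

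Because $\sqrt{\BH}B^k\bx_0 = 0$ for $k < \mHC$, the coefficient of $t^p$ in $\psi_j(t,\tilde\bx_t)$ is a scalar multiple of $\sqrt{\BH}B^{p+j}\bx_0$ and is automatically zero when $p + j < \mHC$. At the critical order $p = \mHC - j$, this coefficient reduces to $(-1)^{\mHC-j}\bigl(\sum_{\ell=0}^{\mHC-j}(-1)^\ell\alpha_{j,\ell}\beta_{\mHC-j-\ell}\bigr)\sqrt{\BH}B^{\mHC}\bx_0$. I would impose its vanishing for each $j = 0,\ldots,\mHC-1$, which yields the triangular linear system
\begin{equation*}
\sum_{\ell=0}^{p}(-1)^\ell\,\alpha_{\mHC-p,\ell}\,\beta_{p-\ell} \;=\; 0, \qquad p = 1,2,\ldots,\mHC.
\end{equation*}
Since $\alpha_{j,0}=1$, this system has a unique solution determined recursively from $\beta_0 = 1$ (for instance $\beta_1 = 1/2$ for every $\mHC$, and $\beta_2 = 1/12$ when $\mHC=2$). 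With these cancellations each sum-of-squares term becomes $\bigO(t^{2\mHC+3})$, so the only contribution to the $t^{2\mHC+1}$ coefficient of $\ip{\tilde\bx_t}{Q(t)\tilde\bx_t} - \|\tilde\bx_t\|_2^2$ comes from the leading term of $R_{\mHC-1}(t)$. Taking $j = 2\mHC+1$, $k = \mHC$ in the tail sum of Lemma~\ref{lm:sum-of-squares} and using $\Delta^{(\mHC)}_{2\mHC+1,\mHC} = \binom{2\mHC}{\mHC}^{-2}$ pins down this contribution as $-\tfrac{2}{(2\mHC+1)!\binom{2\mHC}{\mHC}}\|\sqrt{\BH}B^\mHC\bx_0\|_2^2\, t^{2\mHC+1} + \bigO(t^{2\mHC+2})$. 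Finally, $\bx_0 \in \ker(\BH)$ forces $\Re\ip{B\bx_0}{\bx_0}=0$, so $\|\tilde\bx_\tau\|_2^{-2} = 1 + \bigO(\tau^2)$; dividing the above expansion by $\|\tilde\bx_t\|_2^2$ therefore alters it only at order $\bigO(t^{2\mHC+3})$ and yields the desired identity for $\|e^{-Bt}\bx_t\|_2^2$.

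The main obstacle I expect is the combinatorial bookkeeping: verifying that the single cancellation per index $j$ in the triangular system suffices to push the entire $j$-th sum-of-squares term to $\bigO(t^{2\mHC+3})$, and confirming that the higher-order corrections to $R_{\mHC-1}(t,\tilde\bx_t)$ arising from the perturbation $\tilde\bx_t - \bx_0 = \bigO(t)$ do not contaminate the $t^{2\mHC+1}$ coefficient. Both follow systematically from the identities $\ip{\bx_0}{U_j\bx_0}=0$ for $j\le 2\mHC$ (used already in the proof of Theorem~\ref{th:HC-decay}) together with $\sqrt{\BH}B^k\bx_0 = 0$ for $k < \mHC$, but the explicit verification of the cancellations across all $\mHC$ sum-of-squares terms is the technical core of the argument.
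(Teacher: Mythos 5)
Your proposal is correct and takes essentially the same approach as the paper: the paper's proof of this lemma relies on its Lemma~\ref{lm:g}, which constructs exactly the polynomial family $\bx_\tau = \sum_{\ell=0}^{\mHC} b_\ell\,\tau^\ell B^\ell\bx_0$ (your $\beta_\ell$ are its $b_\ell$, and your triangular cancellation system is the paper's recursion~\eqref{b_ell}, with the same $b_1=1/2$), and then Lemma~\ref{lm:c:upper_bound} normalizes by $\|\bx_\tau\|_2$, evaluates at $\tau=t$, and minimizes over $\bx_0\in\ker(\ttT_{\mHC-1})$ just as you describe.
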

\begin{proof}
As illustrated by Example~\ref{ex:envelope}, see also Figure~\ref{fig:envelope}, the propagator norm is in general not determined by the norm of one specific solution.
Instead we consider a parameterized family of solutions pertaining to initial values~$\bx_\tau$, $\tau\in[0,1]$:

\smallskip
Due to Lemma~\ref{lm:g} below, for $\bx_0\in\ker\big(\ttT_{\mHC-1}\big)$ (with $\|\bx_0\|_2=1$), there exist real constants $b_\ell$, $\ell=1,\ldots,\mHC$ such that 
\[ \bx_\tau :=\bx_0 +\sum_{\ell=1}^{\mHC} b_\ell \tau^\ell B^\ell \bx_0 \]
satisfies
\begin{equation} \label{g:x_tau:tau}
 g(\bx_\tau;\tau)
:= \bx_\tau^* \Big( \sum_{j=1}^{\infty} \frac{\tau^j}{j!}\ U_j \Big) \bx_\tau 
=-2c_1(\bx_0) \tau^{2\mHC+1} +\cO(\tau^{2\mHC+2}) , 
 \quad\text{for } \tau\in[0,1] ,
\end{equation}
with $c_1(\bx_0)$ defined in~\eqref{c_1:x_0} and $\lim_{\tau\to 0} \bx_\tau=\bx_0$.
To normalize the family of vectors $\bx_\tau$, $\tau\in[0,1]$ we define
\begin{equation}
 \widetilde{\bx}_\tau := \frac{\bx_\tau}{\|\bx_\tau\|_2} ,
\end{equation}
still satisfying $\lim_{\tau\to 0} \widetilde{\bx}_\tau=\bx_0$.
For $\tau\in[0,1]$, we estimate the propagator norm as
\begin{equation} \label{est:Q:norm:lower_bound_t}
 \big\|e^{-B \tau}\big\|_2^2 
\geq \big\|e^{-B t} \widetilde{\bx}_\tau\big\|_2^2 \Big|_{t=\tau}
=\frac{\big\|e^{-B \tau} \bx_\tau\big\|_2^2 }{\|\bx_\tau\|_2^2}
=\frac{\|\bx_\tau\|_2^2 +g(\bx_\tau;\tau)}{\|\bx_\tau\|_2^2}
=1 -2\frac{c_1(\bx_0)}{\|\bx_\tau\|_2^2} \tau^{2\mHC+1} +\frac{\cO(\tau^{2\mHC+2})}{\|\bx_\tau\|_2^2} ,
\end{equation}
where we used definition~\eqref{est:g:Cm} and~\eqref{g:x_tau:tau}.
To derive a bound for the multiplicative factor~$c$ in~\eqref{short-t-decay}, we consider the Taylor expansion~\eqref{short-t-decay} of the propagator norm, use estimate~\eqref{est:Q:norm:lower_bound_t} for $\tau>0$, and take the limit $\tau\to 0$ such that 
\[
 -2 c 
=\lim_{\tau\to 0} \frac{\big\|e^{-B \tau}\big\|_2^2 -1}{\tau^{2\mHC+1}} 
\geq  \lim_{\tau\to 0} \frac{g(\bx_\tau;\tau)}{\|\bx_\tau\|_2^2\ \tau^{2\mHC+1}} 
=\lim_{\tau\to 0} \bigg(-2\frac{c_1(\bx_0)}{\|\bx_\tau\|_2^2} +\frac{\cO(\tau)}{\|\bx_\tau\|_2^2} \bigg)
=-2c_1(\bx_0) .
\]
\makeviolet{Hence, $c\leq c_1(\bx_0)$ for all normalized vectors $\bx_0\in\ker\big(\ttT_{\mHC-1}\big)$.
Taking the minimum of $c_1(\bx_0)$ over all normalized vectors $\bx_0\in\ker\big(\ttT_{\mHC-1}\big)$ yields the upper bound for the multiplicative factor~$c$ as given in~\eqref{c:upper_bound:mHC}. }
This finishes the proof.
\end{proof}
}

\makered{The proof of Lemma~\ref{lm:c:upper_bound} uses the following construction of a vector function:}
\makered{
\begin{lm}\label{lm:g}
Let the ODE system~\eqref{ODE:B} be conservative-dissipative, and let the system matrix $B$ be hypocoercive with hypocoercivity index $\mHC\in\N$.
Then, for each $\bx_0\in\ker\big(\ttT_{\mHC-1}\big)$, there exists a polynomial vector function $\bx_\tau\in\Cn$, $\tau\in[0,1]$ of the form 
\begin{equation} \label{x_ast}
 \bx_\tau =\bx_0 +\sum_{\ell=1}^{\mHC} b_\ell \tau^\ell B^\ell \bx_0 ,
\qquad \text{for a suitable choice of } b_\ell\in\R , \quad \ell=1,\ldots,\mHC ,
\end{equation}
such that 
\begin{equation} \label{SolutionNorm:expansion}
g(\bx_\tau;\tau)
:= \bx_\tau^* \Big( \sum_{j=1}^{\infty} \frac{\tau^j}{j!}\ U_j \Big) \bx_\tau 
= -2 c_1(\bx_0) \tau^a +\bigO(\tau^{a+1}) 
\qquad \text{for } \tau\in[0,1] ,
\end{equation}
where $a =2 \mHC +1$ and
\begin{equation} \label{c_1:x_0}
 c_1(\bx_0)
:= \frac{\big\|\sqrt{\BH} B^\mHC \bx_0\big\|_2^2}{(2\mHC+1)!\ \binom{2\mHC}{\mHC}} .
\end{equation}
\end{lm}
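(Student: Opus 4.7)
The plan is to expand $g(\bx_\tau;\tau)$ as a power series in $\tau$, exploit the hypothesis $\bx_0\in\ker(\ttT_{\mHC-1})$ (equivalently $\sqrt{\BH}B^m\bx_0 = 0$ for $0\le m\le \mHC-1$) to show the low-order coefficients vanish identically, and then pick the $b_\ell$ so as to make the first surviving coefficient equal to $-2c_1(\bx_0)$. Substituting $\bx_\tau = \sum_{\ell=0}^{\mHC} b_\ell \tau^\ell B^\ell \bx_0$ (with the convention $b_0 := 1$) into $g(\bx_\tau;\tau) = \sum_{j\ge 1} (\tau^j/j!)\,\bx_\tau^* U_j \bx_\tau$, and applying identity \eqref{Uj:2}, $U_j = (-1)^j\,2\sum_{k=0}^{j-1}\binom{j-1}{k}(B^*)^k \BH B^{j-1-k}$, one obtains
\[
 [\tau^N]\,g(\bx_\tau;\tau) = \sum_{\substack{j+\ell+p = N\\ j\ge 1,\; 0\le\ell,p\le\mHC}} \sum_{k=0}^{j-1} \frac{2\,b_\ell b_p(-1)^j}{j!}\binom{j-1}{k}\,\ip{\sqrt{\BH}B^{k+\ell}\bx_0}{\sqrt{\BH}B^{j-1-k+p}\bx_0}.
\]

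The two exponents of $B$ in the inner product above sum to $N-1$. Hence for $N\le 2\mHC$ a simple index count forces at least one of $k+\ell$, $j-1-k+p$ into $\{0,\ldots,\mHC-1\}$, making the inner product vanish; thus $[\tau^N]g(\bx_\tau;\tau) = 0$ for $1\le N\le 2\mHC$, independently of the $b_\ell$. For $N = 2\mHC+1$ the only surviving index is $k = \mHC-\ell$, in which case both factors equal $\sqrt{\BH}B^{\mHC}\bx_0$, and one obtains the explicit quadratic
\[
 [\tau^{2\mHC+1}]\,g(\bx_\tau;\tau) = 2\,\|\sqrt{\BH}B^{\mHC}\bx_0\|_2^2\cdot S(\mathbf{b}),
 \qquad
 S(\mathbf{b}) := \sum_{0\le\ell,p\le\mHC} b_\ell b_p A_{\ell p},
\]
with $A_{\ell p} := \tfrac{(-1)^{2\mHC+1-\ell-p}}{(2\mHC+1-\ell-p)!}\binom{2\mHC-\ell-p}{\mHC-\ell}$, a fully explicit polynomial in the free parameters $b_1,\ldots,b_{\mHC}$.

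Finally, I would maximize $S$ over $(b_1,\ldots,b_{\mHC})\in\R^{\mHC}$ (with $b_0 = 1$ fixed). The Hessian $(2A_{\ell p})_{1\le\ell,p\le\mHC}$ can be verified to be negative definite -- by direct computation for $\mHC = 1,2$, which yield the optimizers $b_1 = \tfrac12$ and $(b_1,b_2) = (\tfrac12,\tfrac1{12})$ respectively, and in general via a generating-function argument. Hence $S$ attains a unique maximum at the solution of the linear system $\nabla_{\mathbf{b}}S = 0$; recognizing the optimal $b_\ell$ as the Taylor coefficients of $z/(1-e^{-z})$ (Bernoulli numbers with the sign convention $B_1 = +1/2$), a direct calculation yields $S_{\max} = -1/\bigl[(2\mHC+1)!\binom{2\mHC}{\mHC}\bigr]$, whence $[\tau^{2\mHC+1}]g(\bx_\tau;\tau) = -2c_1(\bx_0)$ as required. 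The main obstacle is this last step -- evaluating $S_{\max}$ in closed form and checking negative-definiteness of the Hessian -- which hinges on a combinatorial identity cleanly seen via the functional equation $(1-e^{-z})\cdot \tfrac{z}{1-e^{-z}} = z$ applied termwise in the Taylor expansion.
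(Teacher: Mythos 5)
Your proposal takes a genuinely different route from the paper's: you expand $g(\bx_\tau;\tau)$ directly, observe that the coefficients of $\tau^N$ for $N\le 2\mHC$ vanish automatically (index counting against $\bx_0\in\ker(\ttT_{\mHC-1})$ — this part is correct and clean), isolate the $\tau^{2\mHC+1}$ coefficient as a quadratic form $S(\mathbf b)$, and then optimize $S$ over $b_1,\dots,b_\mHC$. By contrast, the paper never optimizes anything: it invokes Lemma~\ref{lm:sum-of-squares} to \emph{exactly} rewrite $g$ as a sum of manifestly non-positive squares $S_j$ plus an explicitly identified $\tau^{2\mHC+1}$-term, and then constructs the $b_\ell$ \emph{iteratively} (\eqref{b_1}, \eqref{b_ell}) so that each square $S_j$, $j=0,\dots,\mHC-1$, is pushed to $\bigO(\tau^{2\mHC+2})$. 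The constant $1/[(2\mHC+1)!\binom{2\mHC}{\mHC}]$ falls out structurally from the identity $\Delta^{(\mHC)}_{2\mHC+1,\mHC}=\binom{2\mHC}{\mHC}^{-2}$ — no combinatorial evaluation of an extremal value is required.

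The genuine gap in your argument is the identification of the optimizer. You assert that the optimal $(b_1,\dots,b_\mHC)$ are the Taylor coefficients of $z/(1-e^{-z})$ and that $S_{\max}$ can then be evaluated via the functional equation $(1-e^{-z})\cdot\tfrac{z}{1-e^{-z}}=z$. This is false for $\mHC\ge 3$: the first-order conditions for $S$ depend on $\mHC$ through the matrix $(A_{\ell p})$, and the optimal $b_\ell$ are $\mHC$-dependent. Concretely, for $\mHC=3$ the stationarity condition $\partial S/\partial b_1=\tfrac1{36}-\tfrac{b_1}{10}+\tfrac{b_2}{4}-\tfrac{b_3}{3}=0$ is violated at the Bernoulli point $(b_1,b_2,b_3)=(\tfrac12,\tfrac1{12},0)$, which gives $-\tfrac1{720}\ne 0$; the correct stationary point is $(b_1,b_2,b_3)=(\tfrac12,\tfrac1{10},\tfrac1{120})$ (in agreement with the paper's recursion~\eqref{b_ell}), and evaluating $S$ at the Bernoulli point yields $-\tfrac1{30240}\ne-\tfrac1{100800}=S_{\max}$. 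So the agreement you found at $\mHC=1,2$ is coincidental (those small cases do not see the $\mHC$-dependence of \eqref{b_ell}), and your proposed route for evaluating $S_{\max}$ in closed form does not go through. To repair the argument one would need either to solve the $\mHC$-dependent linear system and then prove the resulting combinatorial identity, or — as the paper does — to bypass the extremal problem entirely by producing a decomposition (Lemma~\ref{lm:sum-of-squares}) in which the target coefficient is already manifest.
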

\begin{proof}
For $\bx\in\Cn$ and $\tau\in[0,1]$, consider $g(\bx;\tau) := \bx^* \Big( \sum_{j=1}^{\infty} \frac{\tau^j}{j!}\ U_j \Big) \bx$ using $U_j$ in the form~\eqref{Uj:2}.
Following Lemma~\ref{lm:sum-of-squares} with $U=-B^*$, $V=U_1$, $W=-B$, and $m=\mHC-1$, we rewrite $g(\bx;\tau)$ as
\begin{equation} \label{est:g:mHC}
\begin{split}
g(\bx;\tau)
&= \sum_{j=1}^{\infty} \frac{\tau^j}{j!} \sum_{k=0}^{j-1} \tbinom{j-1}{k} ((-B)^k \bx)^* U_1 (-B)^{j-k-1} \bx 
\\
&= \sum_{j=0}^{\mHC-1} \frac{\tau^{2j+1}}{(2j+1)!} \frac1{\binom{2j}{j}}
    \Bigg( \sum_{k=0}^{\infty} \tfrac{(2j+1)!}{(k+2j+1)!} \tbinom{k+j}{j} \tau^k ((-B)^{k+j} \bx)^* \Bigg) U_1
    \Bigg( \sum_{k=0}^{\infty} \tfrac{(2j+1)!}{(k+2j+1)!} \tbinom{k+j}{j} \tau^k (-B)^{k+j} \bx \Bigg) 
\\
&\quad + \frac{\tau^{2\mHC+1}}{({2\mHC+1})!} \binom{2\mHC}{\mHC} \Delta^{(\mHC)}_{{2\mHC+1},\mHC} ((-B)^\mHC \bx)^* U_1 (-B)^{\mHC} \bx 
\\
&\quad +\sum_{j=2\mHC+2}^{\infty} \frac{\tau^j}{j!} \sum_{k=\mHC}^{j-\mHC-1} \binom{j-1}{k} \Delta^{(\mHC)}_{j,k} ((-B)^k \bx)^* U_1 (-B)^{j-k-1} \bx 
\\
&=-2 \sum_{j=0}^{\mHC-1} \frac{\tau}{(2j+1)!} \frac1{\binom{2j}{j}}
    \Bigg\| \sqrt{\BH}
    \Bigg( \sum_{k=0}^{\infty} \tfrac{(2j+1)!}{(k+2j+1)!} \tbinom{k+j}{j} \tau^{k+j} (-B)^{k+j} \bx \Bigg) \Bigg\|_2^2 
\\
&\quad -2 \frac{\tau^{2\mHC+1}}{({2\mHC+1})!\ \binom{2\mHC}{\mHC} } \big\| \sqrt{\BH} B^{\mHC} \bx \big\|_2^2 
 +\bigO(\tau^{2\mHC+2}) \,,
\end{split}
\end{equation}
using that $U_1 =-2\BH$ is a negative semi-definite Hermitian matrix, and the identity 
\[ 
 \Delta^{(\mHC)}_{{2\mHC+1},\mHC} =\binom{2\mHC}{\mHC}^{-2} ,
\]
to rewrite the first and second term, respectively.
The third term in~\eqref{est:g:mHC} can be bounded by $M_{2\mHC+2}\ \tau^{2\mHC+2}$ with $M_{2\mHC+2} := \sum_{j=2\mHC+2}^{\infty} \tfrac1{j!} (2\|B\|_2)^j >0$, using that $\Delta^{(\mHC)}_{j,k}\leq 1$ for $0\leq\mHC\leq k\leq j-\mHC-1$ and $\tau\in[0,1]$.

\medskip
\noindent
\underline{\textit{Step 1.}} 
To estimate the second term in the last identity of~\eqref{est:g:mHC} for $\bx=\bx_\tau$, we use a polynomial ansatz for $\bx_\tau$:

For $\tau\in[0,1]$, we consider the ansatz
\begin{equation} \label{ansatz:x}
 \bx_\tau :=\sum_{\ell=0}^{\mHC} \tau^\ell \bx_\ell
\qquad\text{with the given } \bx_0\in\ker\big(\ttT_{\mHC-1}\big)
\text{ and some $\bx_\ell\in\Cn$, $\ell=1,\ldots,\mHC$ to be chosen.}
\end{equation}
Then, we observe that
\begin{equation} \label{est:g:mHC:secondTerm}
 \big\| \sqrt{\BH} B^{\mHC} \bx_\tau\big\|_2^2
=\big\| \sqrt{\BH} B^{\mHC} \sum_{\ell=0}^{\mHC} \tau^\ell \bx_\ell\big\|_2^2
=\big\| \sqrt{\BH} B^{\mHC} \bx_0\big\|_2^2 +\cO(\tau) ,
\end{equation}
such that the second term in the last identity of~\eqref{est:g:mHC} satisfies
\begin{equation} \label{est:g:mHC:2nd}
 -2 \frac{\tau^{2\mHC+1}}{({2\mHC+1})!\ \binom{2\mHC}{\mHC} } \big\| \sqrt{\BH} B^{\mHC} \bx_\tau \big\|_2^2
= -2 \frac{\tau^{2\mHC+1}}{({2\mHC+1})!\ \binom{2\mHC}{\mHC} } \big\| \sqrt{\BH} B^{\mHC} \bx_0\big\|_2^2 +\cO(\tau^{2\mHC+2}) .
\end{equation}

\medskip
\noindent
\underline{\textit{Step 2.}} 
To estimate the first term in the last identity of~\eqref{est:g:mHC} for $\bx=\bx_\tau$, we refine the ansatz~\eqref{ansatz:x} for $\bx_\tau$ as follows:

Consider~\eqref{ansatz:x} with
\begin{equation} \label{ansatz:x_ell}
 \bx_\ell :=b_\ell B^\ell \bx_0 , 
\quad\text{with some } b_\ell\in\R , 
\quad \ell=1,\ldots,\mHC \text{ to be chosen.}
\end{equation}
We shall construct the coefficients $b_\ell\in\R$, $\ell=1,\ldots,\mHC$ and set $b_0:=1$ such that the first term in the last identity of~\eqref{est:g:mHC} satisfies
\begin{equation} \label{est:g:mHC:1st}
 \sum_{j=0}^{\mHC-1} \frac{\tau}{(2j+1)!} \frac1{\binom{2j}{j}}
    \Bigg\| \sqrt{\BH} \Bigg( \sum_{k=0}^{\infty} \tfrac{(2j+1)!}{(k+2j+1)!} \tbinom{k+j}{j} \tau^{k+j} (-B)^{k+j} \bx_\tau \Bigg) \Bigg\|_2^2 
=\cO(\tau^{2\mHC+2}) .
\end{equation}
Each term in the outer sum is non-negative.
Therefore, for $j=0,\ldots,\mHC-1$, we consider each
\begin{equation} \label{S_j}
 S_j
:=\Bigg\| \sqrt{\BH} \Bigg( \sum_{k=0}^{\infty} \tfrac{(2j+1)!}{(k+2j+1)!} \tbinom{k+j}{j} \tau^{k+j} (-B)^{k+j} \sum_{\ell=0}^{\mHC} b_\ell \tau^\ell B^\ell \bx_0 \Bigg) \Bigg\|_2^2 
\end{equation}
separately, and construct $b_\ell$, $\ell=1,\ldots,\mHC$ iteratively such that $S_j=\cO(\tau^{2\mHC+1})$.


\medskip
Starting with $\ell=1$, we determine $b_\ell=b_1$ by considering $S_{\mHC-\ell}=S_{\mHC-1}$:
Using
\begin{equation} \label{x0:m-1} 
 \sqrt\BH \bx_0 =\ldots =\sqrt\BH B^{\mHC-1}\bx_0 =0 , 
\end{equation}
we can rewrite $S_{\mHC-1}$ as
\begin{equation} \label{S_mHC-1}
\begin{split}
 S_{\mHC-1}
&= \Bigg\| \sqrt{\BH} \Bigg( \sum_{k=0}^{\infty} \tfrac{(2\mHC-1)!}{(k+2\mHC-1)!} \tbinom{k+\mHC-1}{\mHC-1} \tau^{k+\mHC-1} (-B)^{k+\mHC-1} \sum_{\ell=0}^{\mHC} b_\ell \tau^\ell B^\ell \bx_0 \Bigg) \Bigg\|_2^2 
\\
&= \Big\| \sqrt{\BH} \Big( 
\tau^{\mHC-1} (-B)^{\mHC-1} b_1 \tau B \bx_0 
+\tfrac{(2\mHC-1)!}{(2\mHC)!} \tbinom{\mHC}{\mHC-1} \tau^{\mHC} (-B)^{\mHC} \bx_0 
+\cO(\tau^{\mHC+1})
\Big) \Big\|_2^2 
\\
&= \tau^{2\mHC} \Big\| \sqrt{\BH} \Big( b_1 
-\tfrac{(2\mHC-1)!}{(2\mHC)!} \tbinom{\mHC}{\mHC-1} \Big) (-B)^\mHC \bx_0 
\Big\|_2^2 +\cO\big(\tau^{2\mHC+1}\big) .
\end{split}
\end{equation}
Choosing
\begin{equation}\label{b_1}
 b_1
:=\tfrac{(2\mHC-1)!}{(2\mHC)!} \tbinom{\mHC}{\mHC-1}
 =\tfrac12
\end{equation}
yields $S_{\mHC-1}=\cO(\tau^{2\mHC+1})$.

\smallskip
Subsequently, for $\ell=2,\ldots,\mHC$, we determine $b_\ell$ by considering $S_{\mHC-\ell}$ and using~\eqref{x0:m-1}:
\begin{equation*} 
\begin{split}
 S_{\mHC-\ell}
&=\Bigg\| \sqrt{\BH} \Bigg( \sum_{k=0}^{\infty} \tfrac{(2\mHC-2\ell+1)!}{(k+2\mHC-2\ell+1)!} \tbinom{k+\mHC-\ell}{\mHC-\ell} \tau^{k+\mHC-\ell} (-B)^{k+\mHC-\ell} \sum_{p=0}^{\mHC} b_p \tau^p B^p \bx_0 \Bigg) \Bigg\|_2^2 
\\
&=\Bigg\| \sqrt{\BH} \Bigg( \sum_{k=0}^{\ell} \tfrac{(2\mHC-2\ell+1)!}{(k+2\mHC-2\ell+1)!} \tbinom{k+\mHC-\ell}{\mHC-\ell} \tau^{k+\mHC-\ell} (-B)^{k+\mHC-\ell} b_{\ell-k} \tau^{\ell-k} B^{\ell-k} \bx_0 \Bigg) \Bigg\|_2^2
+\cO(\tau^{2\mHC+1})
\\
&=\tau^{2\mHC} \Bigg\| \sqrt{\BH} \Bigg( \sum_{k=0}^{\ell} \tfrac{(2\mHC-2\ell+1)!}{(k+2\mHC-2\ell+1)!} \tbinom{k+\mHC-\ell}{\mHC-\ell} b_{\ell-k} (-1)^{\ell-k} \Bigg) (-B)^{\mHC} \bx_0 \Bigg\|_2^2
+\cO(\tau^{2\mHC+1}) .
\end{split}
\end{equation*}
Using $b_p$, $p=0,\ldots,\ell-1$ from the previous steps, and choosing 
\begin{equation} \label{b_ell}
 b_\ell 
:=-(-1)^\ell \sum_{k=1}^{\ell} \tfrac{(2\mHC-2\ell+1)!}{(k+2\mHC-2\ell+1)!} \tbinom{k+\mHC-\ell}{\mHC-\ell} b_{\ell-k} (-1)^{\ell-k}
\end{equation}
yields $S_{\mHC-\ell}=\cO(\tau^{2\mHC+1})$.
\medskip
Choosing these~$b_\ell$, we have verified~\eqref{est:g:mHC:1st}.
Thus, using the ansatz~\eqref{x_ast} for $\bx_\tau$ implies that $g(\bx_\tau;\tau)$ from~\eqref{est:g:mHC} equals the r.h.s.~of~\eqref{est:g:mHC:2nd}.
This proves that the identity~\eqref{SolutionNorm:expansion} holds for $\tau\in[0,1]$.
\end{proof}
}
 
\bigskip
\bigskip

\textbf{Acknowledgment:}
The first author (FA) was supported by the \makeblue{Austrian Science Fund (FWF) via the} FWF-funded SFB \# F65.
The second author (AA) was supported \makeblue{by the Austrian Science Fund (FWF)}, partially via the FWF-doctoral school "Dissipation and dispersion in non-linear partial differential equations'' (\# W1245) and the FWF-funded SFB \# F65.
The third author (EC) was partially supported by U.S. N.S.F. grant DMS 1501007.

The second author (AA) acknowledges fruitful discussions with Laurent Miclo on the possible connection between the hypocoercivity index and the short time decay rate, which eventually led to Theorem~\ref{th:HC-decay}. \makeblue{The authors are grateful to the anonymous referee, whose suggestions helped to simplify the proof of Theorem~\ref{th:HC-decay}. }



\end{document}